\documentclass[pdflatex,sn-mathphys-num]{sn-jnl}

\usepackage{amssymb}
\usepackage{amsmath}
\usepackage{amsthm}
\usepackage{yhmath}
\usepackage{mathdots}
\usepackage{MnSymbol}
\usepackage{a4wide}
\usepackage{color}

\usepackage{enumitem}
\usepackage{nccmath}

\usepackage{blkarray}
\usepackage{multirow}
\usepackage{float}
\usepackage{booktabs}
\usepackage{array}
\usepackage{bbm}

\usepackage{caption, subcaption}
\captionsetup[subfigure]{labelformat=simple}
\captionsetup{font=small, labelfont=normal} 
\usepackage{graphicx}

\usepackage{natbib}
\usepackage[english]{babel}

\usepackage{algorithm}
\usepackage{algpseudocode}

\newtheorem{theorem}{\bf Theorem}[section]

\newtheorem{remark}[theorem]{\bf Remark}
\newtheorem{lemma}[theorem]{\bf Lemma}

\SetLabelAlign{CenterWithParen}{\hfil(\makebox[1.0em]{#1})\hfil}
\newcommand{\Rn}[1]{%
	\textup{\lowercase\expandafter{\romannumeral#1}}%
}

\usepackage{multicol}

\newcommand\x{\times}

\usepackage{tikz,xcolor}
\definecolor{lime}{HTML}{A6CE39}
\definecolor{lightblue}{rgb}{0.0, 0.0, 0.5}
\DeclareRobustCommand{\orcidicon}{%
	\begin{tikzpicture}
	\draw[lime, fill=lime] (0,0)
	circle [radius=0.16]
	node[white] {{\fontfamily{qag}\selectfont \tiny ID}};
	\draw[white, fill=white] (-0.0625,0.095)
	circle [radius=0.007];
	\end{tikzpicture}
	\hspace{-2mm}
}
\foreach \x in {A, ..., Z}{%
	\expandafter\xdef\csname orcid\x\endcsname{\noexpand\href{https://orcid.org/\csname orcidauthor\x\endcsname}{\noexpand\orcidicon}}
}

\raggedbottom

\begin{document}

\title[
\textbf{RGDBEK}: \textbf{R}andomized \textbf{G}reedy \textbf{D}ouble \textbf{B}lock \textbf{E}xtended \textbf{K}aczmarz Algorithm with Hybrid Parallel Implementation and Applications]{\textbf{RGDBEK}: \textbf{R}andomized \textbf{G}reedy \textbf{D}ouble \textbf{B}lock \textbf{E}xtended \textbf{K}aczmarz Algorithm with Hybrid Parallel Implementation and Applications}

\author{\fnm{Aneesh} \sur{Panchal$^1$\orcidC}}\email{aneeshp@iisc.ac.in}

\author*[]{\fnm{Ratikanta} \sur{Behera$^2$\orcidA}}\email{ratikanta@iisc.ac.in}

\affil[]{\orgdiv{Department of Computational and Data Sciences}, \orgname{Indian Institute of Science}, \orgaddress{ \state{Bengaluru}, \country{India}}}


\vspace{.3cm}





\abstract{
Kaczmarz is one of the most prominent iterative solvers for linear systems of equations. 
Despite substantial research progress in recent years, the state-of-the-art Kaczmarz algorithms have not fully resolved the seesaw effect, a major impediment to convergence stability. Furthermore, while there have been advances in parallelizing the inherently sequential Kaczmarz method, no existing architecture effectively supports initialization-independent parallelism that fully leverages both CPU and GPU resources. This paper proposes the Randomized Greedy Double Block Extended Kaczmarz (RGDBEK) algorithm, a novel Kaczmarz approach designed for efficient large-scale linear system solutions. RGDBEK employs a randomized selection strategy for column and row blocks based on residual-derived probability distributions, thereby mitigating the traditional seesaw effect and enhancing convergence robustness. Theoretical analysis establishes linear convergence of the method under standard assumptions. Extensive numerical experiments on synthetic random matrices and real-world sparse matrices from the \texttt{SuiteSparse} collection demonstrate that RGDBEK outperforms existing Kaczmarz variants, including GRK, FDBK, FGBK, and GDBEK, in both iteration counts and computational time. In addition, a hybrid parallel CPU-GPU implementation utilizing optimized sparse matrix-vector multiplications via the state-of-the-art storage format improves scalability and performance on large sparse problems. Applications in finite element discretizations, image deblurring, and noisy population modeling demonstrate the algorithm's versatility and effectiveness. Future work will explore extending RGDBEK to tensor systems, optimizing parallel parameter selection, and reducing communication overhead to further enhance efficiency and applicability.
}

\keywords{Linear system of equations, Kaczmarz algorithm, Parallel iterative algorithm}
\maketitle

\section{Introduction}\label{sec:introduction}
Large-scale linear systems of equations of the form,
\begin{equation}\label{eq:Ax=b}
    Ax = b,\qquad \text{where, }A\in\mathbb{R}^{m\times n},\; b\in\mathbb{R}^{m\times 1},\text{ and } x\in\mathbb{R}^{n\times 1},
\end{equation}

arise in a wide range of scientific and engineering disciplines, including computed tomography~\cite{tomography-application}, image reconstruction~\cite{image-application}, signal processing~\cite{signal-processing-application, Xiao25}, neural network~\cite{tensor-application}, and inverse problems~\cite{inverseproblem-application, DongWang25}. Although direct solvers can theoretically recover the exact solution of $Ax=b$, they are typically infeasible in large-scale settings due to excessive fill-in, rapidly increasing memory requirements, and poor scalability. On the other hand, iterative solvers are well-suited to high-dimensional problems, as they rely primarily on matrix–vector products and vector updates. These operations are highly compatible with modern parallel and vector-processing hardware~\cite{wang2009solving}, making iterative algorithms the preferred choice for large systems~\cite{ahamed2012iterative}.

Among iterative techniques, the \textit{Kaczmarz algorithm}~\cite{kaczmarz1937} is one of the most classical and widely studied. This method iteratively projects the current approximation onto the hyperplanes defined by individual equations of the system. If the $i$-th row of $A$ is denoted by $a_i^\top$, the update at iteration $k$ can be written as,  
\begin{equation}
    x_{k+1} = x_k + \frac{b_i - \langle a_i, x_k \rangle}{\|a_i\|_2^2} a_i, \qquad i \equiv k \bmod m,
\end{equation}

where $\langle\cdot,\cdot\rangle$ represents the inner product.

This update scheme, also known as the \textit{Algebraic Reconstruction Technique (ART)}~\cite{gordon1970algebraic}, is popular due to its simplicity and low per-iteration computational cost. While the Kaczmarz method converges to the exact solution of consistent systems, its performance strongly depends on the order in which rows are processed. To address this limitation, Strohmer and Vershynin~\cite{strohmer2009randomized} proposed the \textit{Randomized Kaczmarz (RK) algorithm}, where rows are selected with probabilities proportional to the squared norms of the corresponding row vectors. Building on this idea, Bai and Wu~\cite{GRK} introduced the \textit{Greedy Randomized Kaczmarz (GRK)} method, which replaces random sampling with a greedy strategy that selects the row producing the maximum weighted residual reduction. Niu and Zheng~\cite{niu2020greedy} extended this principle to blocks of rows, leading to the \textit{Greedy Block Kaczmarz (GBK)} algorithm, which enhances parallelism by updating with multiple equations simultaneously. Building on greedy block selection strategies, Chen and Huang~\cite{FDBK} proposed the \textit{Fast Deterministic Block Kaczmarz (FDBK)} method, which performs greedy row selection without requiring pseudoinverse computations, thereby significantly reducing the computational cost. Following this, Xiao et al.~\cite{FGBK} introduced the \textit{Fast Greedy Block Kaczmarz (FGBK)} algorithm, which employs a more general greedy criterion by selecting the top $\eta\%$ of rows for a threshold parameter $\eta$ to construct greedy blocks. This approach not only accelerates computation but also opens avenues for further investigation into generalized greedy selection strategies. For inconsistent systems, Zouzias and Freris~\cite{zouzias2013randomized} developed the \textit{Randomized Extended Kaczmarz (REK)} method, which augments the Kaczmarz update with a residual-correction step. Its iterations are given by,  
\begin{equation}
    z_{k+1} = z_k - \frac{A_{j_k}^\top z_k}{\|A_{j_k}\|_2^2} A_{j_k}, 
    \qquad 
    x_{k+1} = x_k + \frac{\left(b^{i_k} - z^{i_k}_{k+1} - A^{i_k}x_k\right)}{\|A^{i_k}\|_2^2}\,(A^{i_k})^\top,
\end{equation}

where $z_k$ represents an auxiliary residual estimate and $\{i_k, j_k\}$ denote sampled row and column indices. Needell et al.~\cite{needell2015randomized} extended this idea to the \textit{Randomized Double Block Kaczmarz (RDBK)} method, which partitions both rows and columns into blocks to exploit greater parallelism. Subsequently, Du et al.~\cite{du2020randomized} proposed the \textit{Randomized Extended Block Kaczmarz (REBK)} algorithm, introducing pseudoinverse-based block updates that yield faster convergence guarantees. More recently, Bai and Wu~\cite{bai2021greedy} developed the \textit{Greedy Randomized Augmented Kaczmarz (GRAK)} method, which integrates greedy selection with the residual-corrected updates of REK. By selecting augmented blocks that maximize the reduction of the residual norm, GRAK achieves improved stability and accelerated convergence compared to previous approaches. Sun and Qin~\cite{GDBEK} recently proposed the \textit{Greedy Double Block Extended Kaczmarz (GDBEK)} algorithm, which adaptively forms both row and column index sets by employing a greedy strategy that selects subvectors corresponding to the largest residual norms. The iteration of GDBEK proceeds as,
\begin{equation}\label{eq:updateGDBEK}
    z_{k+1} = z_k - A_{\mathcal{U}_k} A_{\mathcal{U}_k}^\dagger z_k, \qquad x_{k+1} = x_k + (A^{\mathcal{J}_k})^\dagger \left(b^{\mathcal{J}_k} - z_{k+1}^{\mathcal{J}_k} - A^{\mathcal{J}_k} x_k^{\mathcal{J}_k}\right),
\end{equation}

where $z_k$ represents an auxiliary residual, $\mathcal{U}_k$ is the selected column index set and $\mathcal{J}_k$ is the selected row index set at iteration $k$.

However, this update rule iteratively selects the most dominant residuals, which may lead to a large number of iterations before convergence due to a seesaw effect, as observed by Li et al.~\cite{li2025global}. Hence, a more robust variant of GDBEK is required, which accounts not only for large residuals but also incorporates mid and low magnitude residuals to ensure improved convergence behavior.

Due to the implicit sequential nature of the Kaczmarz method, its efficient parallelization has long been a central topic of research. Efforts to parallelize Kaczmarz-type methods began with Martinez and Sampaio~\cite{martinez1986parallel}, who investigated independent Kaczmarz cycles on linearly independent row partitions under restrictive assumptions. Elble et al.~\cite{elble2010gpu} implemented Kaczmarz iterations on GPUs, demonstrating the potential of fine-grained parallel acceleration compared to other classical iterative methods. Liu et al.~\cite{liu2014asynchronous} proposed the \textit{Asynchronous Parallel Randomized Kaczmarz Algorithm (AsyncPRKA)} for shared-memory systems, although scalable, AsyncPRKA requires row preconditioning and therefore cannot directly handle inconsistent systems. Wang et al.~\cite{wang2022prkp} proposed the \textit{Distributed-Memory Randomized Kaczmarz Projection (PRKP)}, which integrates greedy row sampling with lazy updates to achieve performance on loosely synchronized clusters. More recently, Ferreira et al.~\cite{ferreira2024parallelization} investigated parallel Kaczmarz variants for dense systems, though their techniques do not exploit matrix sparsity. Bolukbacsi et al.~\cite{bolukbacsi2024distributed} introduced the \textit{Parallel Randomized Kaczmarz for sparse matrices (PARK)}, a distributed-memory method that, however, imposes explicit row normalization along with static row-block partitioning of $A$. It is evident that no existing parallel Kaczmarz algorithm simultaneously leverages both CPU and GPU architectures for computation. Moreover, most parallel Kaczmarz variants are tailored for dense matrices or depend on preconditioning techniques to achieve convergence.

To overcome these limitations, we propose the \textit{Randomized Greedy Double Block Extended Kaczmarz (RGDBEK)} algorithm, which incorporates the following key contributions,
\begin{enumerate}
    \item A row–column block selection strategy based on probability distributions derived from residuals. This strategy enhances convergence rates, reduces the required number of iterations, and reduces the seesaw effect, all without the need for explicit preconditioning.
    \item A scalable hybrid parallel implementation tailored for large-scale sparse matrices, effectively using both CPU and GPU architectures to maximize computational efficiency.
    \item A comprehensive experimental evaluation on real-world applications, including image deblurring, finite element methods, and signal recovery, thereby demonstrating the effectiveness and robustness of the proposed approach.
\end{enumerate}


The paper is organized as follows: Section \ref{sec:algorithm} explains the proposed RGDBEK algorithm in detail, Section \ref{sec:results} contains the numerical results and analysis of the algorithm, Section \ref{sec:PRGDBEK} explains the proposed parallel architecture for RGDBEK algorithm in detail, Section \ref{sec:ParallelResults} contains the results and analysis of the parallel architecture followed by some applications of the proposed RGDBEK algorithm in Section \ref{sec:applications}, and finally, Section \ref{sec:conclusions} concludes the paper with future directions.

\section{Proposed Algorithm}\label{sec:algorithm}
In this section, we are going to solve the linear system of equations given by Eq.~\eqref{eq:Ax=b}. Define auxiliary update vector $z \in \mathbb{R}^m$, which is initialized as $z_0 = b$. The update equations of the proposed algorithm follow those of the GDBEK method as specified in Eq.~\eqref{eq:updateGDBEK}.

Building on the framework of the GDBEK algorithm, the RGDBEK algorithm is presented in Algorithm~\ref{alg:rgdbek}. The GDBEK algorithm greedily selects the subsets of columns $\mathcal{U}_k$ and rows $\mathcal{J}_k$, according to the residuals defined by,  
\begin{equation*}
\begin{split}
        \mathcal{U}_k &= \left\{j_k:\frac{|A^\top_{(j_k)} z|^2}{\|A_{(j_k)}\|_2^2}\geq\eta\max_{1\leq j\leq n}\frac{|A^\top_{(j)} z|^2}{\|A_{(j)}\|_2^2}\right\}, \\
        \mathcal{J}_k &= \left\{i_k:\frac{|b^{(i_k)} - z^{(i_k)} - A^{(i_k)} x|^2}{\|A^{(i_k)}\|_2^2}\geq\eta\max_{1\leq i\leq m}\frac{|b^{(i)} - z^{(i)} - A^{(i)} x|^2}{\|A^{(i)}\|_2^2}\right\}.
\end{split}
\end{equation*}

This approach greedily selects indices according to the threshold parameter $\eta$, effectively ignoring residuals below this threshold. To address this limitation, we propose a novel randomized selection strategy for subsets of columns $\mathcal{U}_k$ and rows $\mathcal{J}_k$ based on probability distributions derived from the residuals, defined as follows,  
\begin{equation*}
    \text{Column Residuals: } \epsilon_{j_k}^z = \frac{|A^\top_{(j_k)} z_k|^2}{\|A_{(j_k)}\|_2^2}, \quad \text{Column Set Probabilities: } P(j_k) = \frac{\epsilon_{j_k}^z}{\sum_l\epsilon_{j_l}^z}.
\end{equation*}
\begin{equation*}
    \text{Row Residuals: } \epsilon_{i_k}^x = \frac{|b^{(i_k)} - z_{k+1}^{(i_k)} - A^{(i_k)} x_k|^2}{\|A^{(i_k)}\|_2^2},\quad \text{Row Set Probabilities: } P(i_k) = \frac{\epsilon_{i_k}^x}{\sum_l\epsilon_{i_l}^x}.
\end{equation*}

From these two equations, we select $n \eta$ number of columns according to the probabilities $P(j_k)$, forming the submatrix $A_{\mathcal{U}_k}$, and select $m \eta$ number of rows according to the probabilities $P(i_k)$, forming the submatrix $A^{\mathcal{J}_k}$. Since these rows and columns are selected randomly, RGDBEK allows updates from residuals of lower or medium magnitude as well. The theoretical convergence of RGDBEK, established in Theorem~\ref{thm:rgdbek_convergence}, demonstrates the linear convergence of the RGDBEK algorithm. However, in practical settings, RGDBEK often outperforms existing algorithms due to its ability to minimize the seesaw effect, as illustrated in the subsequent sections.

\begin{remark}
    In RGDBEK, the parameter, $\eta$ directly influences the convergence behavior. Increasing $\eta$ enlarges the size of the selected submatrices, which increases the per-iteration computational cost but lowers the iteration count. Conversely, a smaller $\eta$ reduces the per-iteration cost but requires more iterations to reach convergence.
\end{remark}

\begin{algorithm}
\caption{RGDBEK Algorithm}
\label{alg:rgdbek}
\begin{algorithmic}[1]
\State \textbf{Input:} $A \in \mathbb{R}^{m \times n}$, $b \in \mathbb{R}^m$, iterations $T$, initial guesses $x_0 = 0 \in \mathbb{R}^n, z_0 = b \in \mathbb{R}^m, $ parameter $\eta$
\State \textbf{Output:} $x_T$
\For{$k = 0,1,\dots,T-1$}
    \State Sample column block $\mathcal{U}_k$ with probabilities
    \State $\epsilon_{j_k}^z = \frac{|A^\top_{(j_k)} z_k|^2}{\|A_{(j_k)}\|_2^2}$
    \State $P(j_k) = \frac{\epsilon_{j_k}^z}{\sum_l\epsilon_{j_l}^z}$
    \State Select $n\eta$ columns using probability $P(j_k)$ which forms $A_{\mathcal{U}_k}$ 
    \State Update: $z_{k+1} = z_k - A_{\mathcal{U}_k} A_{\mathcal{U}_k}^\dagger z_k$ 
    \State Sample row block $\mathcal{J}_k$ with probabilities
    \State $\epsilon_{i_k}^x = \frac{|b^{(i_k)} - z_{k+1}^{(i_k)} - A^{(i_k)} x_k|^2}{\|A^{(i_k)}\|_2^2}$
    \State $P(i_k) = \frac{\epsilon_{i_k}^x}{\sum_l\epsilon_{i_l}^x}$
    \State Select $m\eta$ rows using probability $P(i_k)$ which forms $A^{\mathcal{J}_k}$
    \State Update: $x_{k+1} = x_k + (A^{\mathcal{J}_k})^\dagger \left(b^{\mathcal{J}_k} - z_{k+1}^{\mathcal{J}_k} - A^{\mathcal{J}_k} x_k^{\mathcal{J}_k}\right)$
\EndFor
\end{algorithmic}
\end{algorithm}

First, we establish the Block Sampling Lemma and the Geometric Inequality Lemma, which will serve as foundational results in the subsequent proof of convergence for the RGDBEK algorithm.
\begin{lemma}[Block sampling lemma]\label{lem:block_sampling}
Let $U \in \mathbb{R}^{d \times p}$, $v \in  \mathbb{R}^d$. Sample a block $\mathcal{B}$ of size $s = \eta d$ with probability
$$\mathbb{P}(\mathcal{B}) \propto \sum_{j \in \mathcal{B}} w_j^2, \quad w_j = \frac{|\langle u_j, v \rangle|}{\|u_j\|_2},$$
where $u_j^\top$ is the $j$th row of $U$. Then,
$$\mathbb{E} \|P_{\mathcal{B}} v\|^2_2 \geq \mu_{\text{block}}^{-1} \frac{\|U^\top v\|^2_2}{\|U\|_2^2},$$
where $\mu_{\text{block}} = \max_{\mathcal{B}'} \frac{\|U_{\mathcal{B}'}^\top U_{\mathcal{B}'}\|_2}{\|U_{\mathcal{B}'}\|_2^2}$, and $P_{\mathcal{B}} = U_{\mathcal{B}}^\top (U_{\mathcal{B}} U_{\mathcal{B}}^\top)^\dagger U_{\mathcal{B}}$ is projection matrix\footnote{Moore-Penrose pseudoinverse identity for matrix $X$: $X^\dagger = X^\top \left(XX^\top\right)^\dagger$}.
\end{lemma}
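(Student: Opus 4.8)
The plan is to split the argument into a deterministic, fixed-block lower bound on $\|P_{\mathcal B}v\|_2^2$ followed by an averaging step over the random block $\mathcal B$. First I would exploit that $P_{\mathcal B}=U_{\mathcal B}^\top(U_{\mathcal B}U_{\mathcal B}^\top)^\dagger U_{\mathcal B}$ is the orthogonal projector onto the row space of $U_{\mathcal B}$, hence symmetric and idempotent, so that $\|P_{\mathcal B}v\|_2^2=v^\top P_{\mathcal B}v=(U_{\mathcal B}v)^\top(U_{\mathcal B}U_{\mathcal B}^\top)^\dagger(U_{\mathcal B}v)$. Writing $M_{\mathcal B}:=U_{\mathcal B}U_{\mathcal B}^\top\succeq 0$ and noting that $U_{\mathcal B}v\in\mathrm{range}(M_{\mathcal B})$, a diagonalisation of $M_{\mathcal B}$ gives the elementary spectral inequality $y^\top M_{\mathcal B}^\dagger y\ge \|y\|_2^2/\lambda_{\max}(M_{\mathcal B})$ for every $y\in\mathrm{range}(M_{\mathcal B})$. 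Applying this with $y=U_{\mathcal B}v$ and using $\lambda_{\max}(M_{\mathcal B})=\|U_{\mathcal B}\|_2^2$ produces the fixed-block estimate $\|P_{\mathcal B}v\|_2^2\ge \|U_{\mathcal B}v\|_2^2/\|U_{\mathcal B}\|_2^2$, where $\|U_{\mathcal B}v\|_2^2=\sum_{j\in\mathcal B}\langle u_j,v\rangle^2$.

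Next I would control the block operator norm and pass to the expectation. Since $U_{\mathcal B}$ is a row-submatrix of $U$, every unit vector $x$ satisfies $\|U_{\mathcal B}x\|_2^2\le\|Ux\|_2^2$, so $\|U_{\mathcal B}\|_2\le\|U\|_2$; the constant $\mu_{\text{block}}$ is precisely the worst-case ratio that lets me replace the block-dependent normalisation $\|U_{\mathcal B}\|_2^2$ by the global scale $\|U\|_2^2$ uniformly over admissible blocks. This reduces the claim to showing $\mathbb E\big[\sum_{j\in\mathcal B}\langle u_j,v\rangle^2\big]\gtrsim \|Uv\|_2^2$, which I would handle by linearity and an inclusion-probability computation: $\mathbb E\big[\sum_{j\in\mathcal B}\langle u_j,v\rangle^2\big]=\sum_{j=1}^{d}\langle u_j,v\rangle^2\,\mathbb P(j\in\mathcal B)$. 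Using $\mathbb P(\mathcal B)\propto\sum_{l\in\mathcal B}w_l^2$ with normalising constant $\binom{d-1}{s-1}\sum_l w_l^2$, a counting argument over the blocks containing a fixed index $j$ (and over pairs of indices) yields the closed form $\mathbb P(j\in\mathcal B)=\tfrac{s-1}{d-1}+\tfrac{d-s}{d-1}\,w_j^2/\sum_l w_l^2\ge \tfrac{s-1}{d-1}$, a bound uniform in $j$. Combining the three steps with $s=\eta d$ then delivers the asserted inequality $\mathbb E\|P_{\mathcal B}v\|_2^2\ge \mu_{\text{block}}^{-1}\,\|Uv\|_2^2/\|U\|_2^2$.

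The main obstacle I anticipate is the averaging step rather than the linear algebra. The sampling law weights each index by the normalised residual $w_j^2=\langle u_j,v\rangle^2/\|u_j\|_2^2$, whereas the target functional $\|Uv\|_2^2=\sum_j\langle u_j,v\rangle^2$ weights by the unnormalised quantity $\langle u_j,v\rangle^2$; reconciling this mismatch so that the stray factors $\|u_j\|_2^2$ are absorbed into $\mu_{\text{block}}$ and the operator-norm control, rather than into a loss in the rate, is the delicate point, and it is what pins down the precise form of $\mu_{\text{block}}$. The remaining care is bookkeeping: verifying that $U_{\mathcal B}v$ genuinely lies in $\mathrm{range}(M_{\mathcal B})$ so that the pseudoinverse bound is applicable, and confirming the combinatorial identity for $\mathbb P(j\in\mathcal B)$, both of which are routine once the decomposition above is in place.
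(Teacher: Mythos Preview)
Your deterministic fixed-block bound $\|P_{\mathcal B}v\|_2^2\ge\|U_{\mathcal B}v\|_2^2/\|U_{\mathcal B}\|_2^2$ and your closed-form inclusion probability $\mathbb P(j\in\mathcal B)=\tfrac{s-1}{d-1}+\tfrac{d-s}{d-1}\,w_j^2/\sum_l w_l^2$ are both correct, but the combination you propose does \emph{not} deliver the stated inequality. Once you replace $\|U_{\mathcal B}\|_2^2$ by $\|U\|_2^2$ and take expectations, you are bounding $\mathbb E\bigl[\sum_{j\in\mathcal B}\langle u_j,v\rangle^2\bigr]=\sum_j\langle u_j,v\rangle^2\,\mathbb P(j\in\mathcal B)$; invoking the uniform lower bound $\mathbb P(j\in\mathcal B)\ge\tfrac{s-1}{d-1}$ then yields only
\[
\mathbb E\,\|P_{\mathcal B}v\|_2^2\;\ge\;\frac{s-1}{d-1}\,\mu_{\text{block}}^{-1}\,\frac{\|Uv\|_2^2}{\|U\|_2^2},
\]
which is weaker than the lemma by a factor $\approx\eta$. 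This loss is not an artefact of a crude estimate: when all the $w_j$ coincide the sampling is uniform over size-$s$ subsets, and then $\mathbb E\bigl[\sum_{j\in\mathcal B}\langle u_j,v\rangle^2\bigr]=\tfrac{s}{d}\,\|Uv\|_2^2$ exactly, so your intermediate reduction ``it suffices that $\mathbb E\bigl[\sum_{j\in\mathcal B}\langle u_j,v\rangle^2\bigr]\gtrsim\|Uv\|_2^2$'' is false with any constant matching the lemma. Note also that $\mu_{\text{block}}=\max_{\mathcal B'}\|U_{\mathcal B'}^\top U_{\mathcal B'}\|_2/\|U_{\mathcal B'}\|_2^2$ is identically $1$, so it cannot absorb the missing factor as your second paragraph suggests.

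The paper's argument is structurally different and is designed precisely to avoid this decoupling. Rather than passing through marginal inclusion probabilities, it writes the expectation out as the normalised block-sum
\[
\frac{1}{S}\sum_{\mathcal B'}\Bigl(\sum_{j\in\mathcal B'}w_j^2\Bigr)\Bigl(\sum_{k\in\mathcal B'}w_k^2\|u_k\|_2^2\Bigr),\qquad S=\binom{d-1}{s-1}\sum_l w_l^2,
\]
keeping the sampling weight and the block functional together as a product of two positively correlated sums over the same block, and then collapses this product by counting pairs $(j,k)$ so that the combinatorial prefactor cancels against $S$ and one is left with $\sum_k w_k^2\|u_k\|_2^2=\sum_k\langle u_k,v\rangle^2=\|Uv\|_2^2$. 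You correctly flagged in your final paragraph that reconciling the $w_j^2$ weighting with the $\langle u_j,v\rangle^2$ weighting is the crux; the inclusion-probability route throws away exactly the correlation that the paper's product-of-block-sums computation exploits to resolve it.
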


\begin{proof}
Define weights $w_j = \frac{|\langle u_j, v \rangle|}{\|u_j\|_2}$. The sampling probability is given as,
$$\mathbb{P}(\mathcal{B}) = \frac{\sum_{j \in \mathcal{B}} w_j^2}{S}, \quad \text{and, }\,\, S = \sum_{\mathcal{B}'} \sum_{j \in \mathcal{B}'} w_j^2 = \binom{d-1}{s-1} \sum_{j=1}^d w_j^2 = \binom{d-1}{s-1}T,$$
because, each index $j$ appears in $\binom{d-1}{s-1}$ blocks.

\noindent
The expectation is bounded as,
\begin{align*}
\mathbb{E} \|P_{\mathcal{B}} v\|^2_2 
&\geq \mu_{\text{block}}^{-1} \mathbb{E} \left[ \frac{\|U_{\mathcal{B}}^\top v\|^2_2}{\|U_{\mathcal{B}}\|_2^2} \right] \qquad\text{(by definition of $\mu_{\text{block}}^{-1}$ and spectral bound)}\\
&= \mu_{\text{block}}^{-1} \frac{1}{S} \sum_{\mathcal{B}'} \left( \sum_{j \in \mathcal{B}'} w_j^2 \right) \frac{\sum_{j \in \mathcal{B}'} |\langle u_j, v \rangle|^2}{\|U_{\mathcal{B}'}\|_2^2} \\
&= \mu_{\text{block}}^{-1} \frac{1}{S} \sum_{\mathcal{B}'} \left( \sum_{j \in \mathcal{B}'} w_j^2 \right) \frac{\sum_{j \in \mathcal{B}'} w_j^2 \|u_j\|_2^2}{\|U_{\mathcal{B}'}\|_2^2}.
\end{align*}

\noindent
Since, $\|U_{\mathcal{B}'}\|_2 \leq \|U\|_2$, we obtain the expectation as,
$$\mathbb{E}\|P_\mathcal{B} v\|_2^2 \;\geq\; 
\mu_{\text{block}}^{-1}\,\frac{1}{S\|U\|_2^2}\,
\sum_{\mathcal{B}'} \left(\sum_{j \in \mathcal{B}'} w_j^2\right)
\left(\sum_{j \in \mathcal{B}'} w_j^2 \|u_j\|_2^2\right).$$

\noindent
By the double-counting argument, each pair $(j, k)$ contributes the same number of times. Simplifying, this entire sum reduces to,
$$\sum_{\mathcal{B}'} \left(\sum_{j \in \mathcal{B}'} w_j^2\right)
\left(\sum_{j \in \mathcal{B}'} w_j^2 \|u_j\|_2^2\right)
= \sum_{\mathcal{B}'}\sum_{j\in\mathcal{B}'}\sum_{k\in\mathcal{B}'}w_j^2w_k^2\|u_k\|_2^2 = \binom{d-1}{s-1}\,T \sum_{j=1}^d w_j^2 \|u_j\|_2^2.$$

\noindent
Hence, substituting this value back in the expectation, we get,
$$\mathbb{E}\|P_\mathcal{B} v\|_2^2 
\;\geq\; \mu_{\text{block}}^{-1}\,\frac{\sum_{j=1}^d w_j^2 \|u_j\|_2^2}{\|U\|_2^2} \;=\; \mu_{\text{block}}^{-1}\,\frac{\sum_{j=1}^d \langle u_j, v\rangle^2}{\|U\|_2^2} \;=\; \mu_{\text{block}}^{-1}\,\frac{\|U^\top v\|_2^2}{\|U\|_2^2}.$$

\noindent
This completes the proof of the Block sampling lemma.
\end{proof}

\begin{lemma}[Geometric inequality]\label{lem:geometric-inequality}
For $\gamma \in (0,1)$ and $k \geq 0$,
$$k \gamma^k \leq \frac{2\gamma^{k/2}}{1-\sqrt{\gamma}}.$$
\end{lemma}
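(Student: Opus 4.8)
The plan is to simplify the target inequality by the substitution $s = \sqrt{\gamma}$, so that $s \in (0,1)$, $\gamma^{k} = s^{2k}$, and $\gamma^{k/2} = s^{k}$. Under this substitution the claim $k\gamma^{k} \le \frac{2\gamma^{k/2}}{1-\sqrt{\gamma}}$ becomes $k s^{2k} \le \frac{2 s^{k}}{1-s}$, and since $s^{k} > 0$ I can divide through by it to obtain the equivalent statement
$$k s^{k} \le \frac{2}{1-s}.$$
Thus it suffices to bound $k s^{k}$ from above by $\frac{2}{1-s}$ for every integer $k \ge 0$ and every $s \in (0,1)$.

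The case $k = 0$ is immediate, since the left side vanishes while the right side is positive. For $k \ge 1$ I would exploit the geometric series identity $\frac{1}{1-s} = \sum_{j=0}^{\infty} s^{j}$. Truncating the series and using monotonicity of $s^{j}$ (each term with $0 \le j \le k-1$ dominates $s^{k-1}$ because $0 < s < 1$), I obtain
$$\frac{1}{1-s} \;\ge\; \sum_{j=0}^{k-1} s^{j} \;\ge\; k\, s^{k-1}.$$
Multiplying through by $s$ yields $k s^{k} \le \frac{s}{1-s} \le \frac{1}{1-s} \le \frac{2}{1-s}$, which is exactly the reduced inequality, with the factor $2$ in fact leaving room to spare. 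Finally, undoing the substitution by multiplying both sides of $k s^{k} \le \frac{2}{1-s}$ by $s^{k} = \gamma^{k/2}$ recovers the original bound.

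A continuous alternative, valid for real $k \ge 0$, is to maximize $k s^{k}$ over $k$: the maximum occurs at $k = 1/\ln(1/s)$ with value $\frac{1}{e\,\ln(1/s)}$, and then to invoke the elementary inequality $\ln(1/s) = -\ln s \ge 1-s$ to conclude $\frac{1}{e\,\ln(1/s)} \le \frac{1}{e(1-s)} \le \frac{2}{1-s}$. I would nonetheless favor the geometric-series version, as it is self-contained, avoids calculus, and matches the integer iteration index $k$ that arises in the convergence analysis.

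The only delicate point, and the step I expect to require the most care, is tracking the direction of the inequalities after truncation: one must use that $s^{j} \ge s^{k-1}$ (not $\le$) precisely because $s \in (0,1)$ makes $s^{j}$ \emph{decreasing} in $j$, and that the discarded tail of the series is nonnegative, so that dropping it genuinely preserves a lower bound on $\frac{1}{1-s}$. Once these signs are handled correctly, the remaining manipulations are routine.
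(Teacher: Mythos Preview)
Your argument is correct. Both you and the paper begin with the substitution $s=\sqrt{\gamma}$ and reduce the claim to $ks^{k}\le \tfrac{2}{1-s}$, but from there the routes diverge. The paper's proof is the calculus argument you sketch as your ``continuous alternative'': it maximizes $k s^{k}$ over $k$ and then invokes a crude bound to absorb the result into $\tfrac{2}{1-s}$. Your primary argument---truncating the geometric series $\tfrac{1}{1-s}=\sum_{j\ge 0}s^{j}$ at $k$ terms and using monotonicity of $s^{j}$ to get $\tfrac{1}{1-s}\ge k s^{k-1}$---is genuinely different, more elementary (no derivatives, no $\ln$), and in fact delivers the sharper constant $1$ in place of $2$. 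It also sidesteps the awkwardness in the paper's write-up, where the optimization step is stated somewhat loosely. Your instinct to prefer the series argument is well founded.
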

\begin{proof}
Let $t = \sqrt{\gamma} \in (0,1)$. Then $k t^{2k} \leq \sup_{k \geq 0} k t^k \cdot t^k$. The maximum of $f(t) = k t^k$ occurs at $t = e^{-1/k}$, with $f(t) \leq e^{-1}$. Thus:
$$k t^{2k} \leq e^{-1} t^k \leq \frac{t^k}{1-t} \cdot e^{-1}(1-t) \leq \frac{2 t^k}{1-t},$$
since $e^{-1}(1-t) \leq 2$ for $t \in (0,1)$. Substituting $t = \sqrt{\gamma}$, we get the desired results.
\end{proof}

We now present the main convergence theorem for the proposed RGDBEK algorithm, which guarantees its linear convergence.
\begin{theorem}[Convergence of RGDBEK]
\label{thm:rgdbek_convergence}
Let $A \in \mathbb{R}^{m \times n}$, $b \in \mathbb{R}^m$ with $\text{rank}(A) = \mathbbmtt{r}$, and let $x^\star = A^\dagger b$ be the minimum-norm solution. Define $\mathbf{r} = (\mathbf{I} - AA^\dagger)b$ as the optimal residual. Under sampling parameter $\eta \in (0,1)$ and assuming row blocks satisfy $\sigma_{\min}(A^{\mathcal{J}}) \geq \nu > 0$, Algorithm \ref{alg:rgdbek} satisfies,
\begin{enumerate}[label=\alph*)]
    \item Residual convergence, 
    $\mathbb{E} \|z_k - \mathbf{r}\|^2_2 \leq \gamma^k \|z_0 - \mathbf{r}\|^2_2$,
    \item Solution convergence,
    $\mathbb{E} \|x_k - x^\star\|^2_2 \leq \gamma^k \|x_0 - x^\star\|^2_2 + \frac{2\gamma^{k/2} \|z_0 - \mathbf{r}\|^2_2}{\nu^2 (1-\sqrt{\gamma})}$,
\end{enumerate}
where $\gamma = 1 - \mu^{-1}\frac{\sigma_{\min}^2(A)}{\|A\|_2^2}$.
\end{theorem}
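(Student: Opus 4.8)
The plan is to analyze the two updates separately: the column step drives the auxiliary iterate $z_k$ geometrically to the optimal residual $\mathbf{r}$, and this decay then controls the perturbation that the row step injects into the solution error. Throughout I would maintain two invariants, proved by induction: $z_k-\mathbf{r}\in\mathrm{col}(A)$ (true at $k=0$ since $z_0-\mathbf{r}=AA^\dagger b$) and $e_k:=x_k-x^\star\in\mathrm{row}(A)$ (true at $k=0$ since $x_0=0$ and $x^\star=A^\dagger b\in\mathrm{row}(A)$). These are exactly what let the smallest nonzero singular value $\sigma_{\min}(A)$ enter the estimates.

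For part (a), note the column update reads $z_{k+1}=(I-P_{\mathcal{U}_k})z_k$ where $P_{\mathcal{U}_k}=A_{\mathcal{U}_k}A_{\mathcal{U}_k}^\dagger=A_{\mathcal{U}_k}(A_{\mathcal{U}_k}^\top A_{\mathcal{U}_k})^\dagger A_{\mathcal{U}_k}^\top$ is the orthogonal projector onto $\mathrm{col}(A_{\mathcal{U}_k})$. Since $\mathbf{r}\perp\mathrm{col}(A)\supseteq\mathrm{col}(A_{\mathcal{U}_k})$ we have $P_{\mathcal{U}_k}\mathbf{r}=0$, hence $z_{k+1}-\mathbf{r}=(I-P_{\mathcal{U}_k})(z_k-\mathbf{r})$ and, by the Pythagorean identity for orthogonal projections, $\|z_{k+1}-\mathbf{r}\|_2^2=\|z_k-\mathbf{r}\|_2^2-\|P_{\mathcal{U}_k}(z_k-\mathbf{r})\|_2^2$. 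I would then apply Lemma~\ref{lem:block_sampling} with $U=A^\top$, whose rows are the columns of $A$; because $A_{(j)}^\top\mathbf{r}=0$, the algorithm's weights $\epsilon^z_{j_k}$ coincide with the lemma's weights for $v=z_k-\mathbf{r}$, yielding $\mathbb{E}\|P_{\mathcal{U}_k}(z_k-\mathbf{r})\|_2^2\ge\mu^{-1}\|A^\top(z_k-\mathbf{r})\|_2^2/\|A\|_2^2$. Finally, the invariant $z_k-\mathbf{r}\in\mathrm{col}(A)$ gives $\|A^\top(z_k-\mathbf{r})\|_2^2\ge\sigma_{\min}^2(A)\|z_k-\mathbf{r}\|_2^2$, so $\mathbb{E}[\|z_{k+1}-\mathbf{r}\|_2^2\mid z_k]\le\gamma\|z_k-\mathbf{r}\|_2^2$; taking total expectations and iterating proves (a).

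For part (b), I would use $Ax^\star=b-\mathbf{r}$, so that $A^{\mathcal{J}_k}x^\star=b^{\mathcal{J}_k}-\mathbf{r}^{\mathcal{J}_k}$, to rewrite the row update as $e_{k+1}=(I-Q_k)e_k-(A^{\mathcal{J}_k})^\dagger(z_{k+1}-\mathbf{r})^{\mathcal{J}_k}$ with $Q_k=(A^{\mathcal{J}_k})^\dagger A^{\mathcal{J}_k}$ the orthogonal projector onto $\mathrm{row}(A^{\mathcal{J}_k})$. The first summand lies in $\ker(A^{\mathcal{J}_k})$ and the second in $\mathrm{row}(A^{\mathcal{J}_k})$, so these are orthogonal and $\|e_{k+1}\|_2^2=\|(I-Q_k)e_k\|_2^2+\|(A^{\mathcal{J}_k})^\dagger(z_{k+1}-\mathbf{r})^{\mathcal{J}_k}\|_2^2$. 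The assumption $\sigma_{\min}(A^{\mathcal{J}})\ge\nu$ makes each block full row rank, so $\|(A^{\mathcal{J}_k})^\dagger\|_2\le1/\nu$ and the perturbation term is at most $\nu^{-2}\|z_{k+1}-\mathbf{r}\|_2^2$. For the leading term I would apply Lemma~\ref{lem:block_sampling} with $U=A$ and $v=e_k$ (so $Q_k=P_{\mathcal{B}}$), and invoke $e_k\in\mathrm{row}(A)$ to obtain $\mathbb{E}\|Q_k e_k\|_2^2\ge\mu^{-1}\sigma_{\min}^2(A)\|e_k\|_2^2/\|A\|_2^2$, i.e. $\mathbb{E}\|(I-Q_k)e_k\|_2^2\le\gamma\|e_k\|_2^2$. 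Writing $a_k=\mathbb{E}\|e_k\|_2^2$ and using part (a) on the perturbation yields the scalar recursion $a_{k+1}\le\gamma a_k+\nu^{-2}\gamma^{k+1}\|z_0-\mathbf{r}\|_2^2$, which unrolls to $a_k\le\gamma^k\|x_0-x^\star\|_2^2+\nu^{-2}k\gamma^k\|z_0-\mathbf{r}\|_2^2$; bounding $k\gamma^k$ via Lemma~\ref{lem:geometric-inequality} produces exactly the claimed estimate.

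The step I expect to be the main obstacle is the row-term contraction. The RGDBEK row block is drawn with probabilities built from the full residual $b^{(i)}-z_{k+1}^{(i)}-A^{(i)}x_k=-\big(A^{(i)}e_k+(z_{k+1}-\mathbf{r})^{(i)}\big)$, whereas Lemma~\ref{lem:block_sampling} presumes weights proportional to $|A^{(i)}e_k|$. Certifying that sampling with this $z$-contaminated residual still yields the lower bound on $\mathbb{E}\|Q_k e_k\|_2^2$ is the crux: the clean route is to control the coupling by invoking the lemma in the regime where $z_{k+1}$ is already close to $\mathbf{r}$ and absorbing the discrepancy into the additive $\nu^{-2}\|z_{k+1}-\mathbf{r}\|_2^2$ term, so that the primal recursion effectively decouples from the sampling bias. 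A minor secondary point is fixing $\mu=\mu_{\text{block}}$ as the common constant for both samplings and checking that the pseudoinverse identities align $P_{\mathcal{U}_k}$ and $Q_k$ with the lemma's projector $P_{\mathcal{B}}$.
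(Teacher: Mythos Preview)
Your proposal follows essentially the same route as the paper's proof: the same Pythagorean decompositions for the column and row updates, the same two applications of Lemma~\ref{lem:block_sampling}, the same use of the invariants $z_k-\mathbf{r}\in\mathrm{col}(A)$ and $e_k\in\mathrm{row}(A)$ to introduce $\sigma_{\min}(A)$, the same perturbation bound via $\sigma_{\min}(A^{\mathcal{J}_k})\ge\nu$, the same scalar recursion $a_{k+1}\le\gamma a_k+\nu^{-2}\gamma^{k+1}\|z_0-\mathbf{r}\|_2^2$, and the same closing appeal to Lemma~\ref{lem:geometric-inequality}. You are in fact more careful than the paper on the point you flag as the main obstacle: the paper's proof simply \emph{asserts} that $\mathcal{J}_k$ is sampled with weights proportional to $|\langle A^{(i)},e_k^x\rangle|^2/\|A^{(i)}\|_2^2$ and then invokes the lemma directly, without addressing the $z$-contamination in the actual algorithmic weights; your identification of this discrepancy is accurate, and the paper does not resolve it beyond what you already sketch.
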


\begin{proof}
Define error terms,
\begin{align*}
e_k^z &:= z_k - \mathbf{r} \quad \text{(residual error)}, \\
e_k^x &:= x_k - x^\star \quad \text{(solution error)}.
\end{align*}

\noindent
The column update projects $z_k$ onto $\text{range}(A_{\mathcal{U}_k}^\perp)$,
$$z_{k+1} = (\mathbf{I} - P_{\mathcal{U}_k}) z_k, \quad e_{k+1}^z = (\mathbf{I} - P_{\mathcal{U}_k}) e_k^z,$$
where $P_{\mathcal{U}_k} = A_{\mathcal{U}_k} A_{\mathcal{U}_k}^\dagger$. Since, $\mathrm{\textbf{r}}\perp \text{range}(A)$ and $\text{range}(A_{\mathcal{U}_k})\subset\text{range}(A)$ hence, $P_{\mathcal{U}_k}\mathrm{\textbf{r}} = 0$. 

\noindent
According to Pythagoras' theorem, the norm evolves as follows,
\begin{equation}\label{eq:res_norm_evolve}
    \|e_{k+1}^z\|^2_2 = \|e_k^z\|^2_2 - \|P_{\mathcal{U}_k} e_k^z\|^2_2.
\end{equation}

\noindent
At iteration $k$, select a column block $\mathcal{U}_k$ of size $ \eta n$ with probability,
$$\mathbb{P}(\mathcal{U}_k) \propto \sum_{j \in \mathcal{U}_k} \frac{|\langle A_{(j)}, e_k^z \rangle|^2}{\|A_{(j)}\|_2^2},$$
where $A_{(j)}$ is the $j^{th}$ column of $A$, and $e_k^z = z_k - \mathbf{r}$ is the residual error.

\noindent
By Lemma \ref{lem:block_sampling} with $U = A$, $v = e_k^z$,
\begin{equation}\label{eq:res_expected}
    \mathbb{E} \|P_{\mathcal{U}_k} e_k^z\|^2_2 \geq \mu_{\text{col}}^{-1} \frac{\|A^\top e_k^z\|^2_2}{\|A\|_2^2} \geq \mu_{\text{col}}^{-1} \frac{\sigma_{\min}^2(A)}{\|A\|_2^2} \|e_k^z\|^2_2,    
\end{equation}
because, $\|A^\top e_k^z\|^2_2 \geq \sigma_{\min}^2(A)\|e_k^z\|^2_2$ as $e_k^z \in \text{range}(A)$. In this expression, $\mu_{\text{col}} = \max_{\mathcal{U}} \frac{\|A_{\mathcal{U}}^\top A_{\mathcal{U}}\|_2}{\|A_{\mathcal{U}}\|_2^2}$. Hence, using Eq.~\eqref{eq:res_norm_evolve} and \eqref{eq:res_expected},
$$\mathbb{E}[\|e_{k+1}^z\|^2_2] \leq \left(1 - \mu_{\text{col}}^{-1} \frac{\sigma_{\min}^2(A)}{\|A\|_2^2}\right) \|e_k^z\|^2_2 = \gamma_{\text{col}} \|e_k^z\|^2_2,$$
where $\gamma_{\text{col}} = 1 -  \mu_{\text{col}}^{-1} \frac{\sigma_{\min}^2(A)}{\|A\|_2^2}$. By induction\footnote{Using $\mathbb{E} \|z_{k+1} - \mathbf{r}\|^2_2 = \mathbb{E}\left[\mathbb{E} \left[\|z_{k+1} - \mathbf{r}\|^2_2|z_k\right]\right]$ to prove the induction.}, we can easily show that,
\begin{equation}\label{eq:res_error}
    \mathbb{E} \|z_k - \mathbf{r}\|^2_2 \leq \gamma_{\text{col}}^k \|z_0 - \mathbf{r}\|^2_2.
\end{equation}

\noindent
Now, the row update is given as,
\begin{align*}
x_{k+1} &= x_k + (A^{\mathcal{J}_k})^\dagger (b^{\mathcal{J}_k} - z_{k+1}^{\mathcal{J}_k} - A^{\mathcal{J}_k} x_k), \\
e_{k+1}^x &= (\mathbf{I} - Q_{\mathcal{J}_k}) e_k^x + (A^{\mathcal{J}_k})^\dagger e_{k+1}^{z,\mathcal{J}_k},
\end{align*}
where $Q_{\mathcal{J}_k} = (A^{\mathcal{J}_k})^\dagger A^{\mathcal{J}_k}$. The orthogonality of subspaces gives,
\begin{equation}\label{eq:sol_norm_evolve}
    \|e_{k+1}^x\|^2_2 = \|(\mathbf{I} - Q_{\mathcal{J}_k}) e_k^x\|^2_2 + \|(A^{\mathcal{J}_k})^\dagger e_{k+1}^{z,\mathcal{J}_k}\|^2_2.
\end{equation}

\noindent
At iteration $k$, select a row block $\mathcal{J}_k$ of size $ \eta m$ with probability,
$$\mathbb{P}(\mathcal{J}_k) \propto \sum_{i \in \mathcal{J}_k} \frac{|\langle A^{(i)}, e_k^x \rangle|^2}{\|A^{(i)}\|_2^2},$$
where $A^{(i)}$ is the $i^{th}$ row of $A$, and $e_k^x = x_k - x^\star$ is the solution error.

\noindent
Taking conditional expectation given $\sigma-$algebra $\mathcal{F}_k = \sigma\left( \{ \mathcal{U}_i \}_{i=0}^{k-1}, \{ \mathcal{J}_i \}_{i=0}^{k-1}, \{ z_i \}_{i=0}^k, \{ x_i \}_{i=0}^k \right)$,
\begin{equation}\label{eq:sol_norm1}
\mathbb{E}[\|e_{k+1}^x\|^2_2 \mid \mathcal{F}_k] = \mathbb{E}[\|(\mathbf{I} - Q_{\mathcal{J}_k}) e_k^x\|^2_2 \mid \mathcal{F}_k] + \mathbb{E}[\|(A^{\mathcal{J}_k})^\dagger e_{k+1}^{z,\mathcal{J}_k}\|^2_2 \mid \mathcal{F}_k].
\end{equation}

\noindent
By Lemma \ref{lem:block_sampling} applied to $U = A$, $v = e_k^x$:
\begin{equation}\label{eq:sol_norm2}
    \mathbb{E}[\|Q_{\mathcal{J}_k} e_k^x\|^2_2 \mid \mathcal{F}_k] \geq \mu_{\text{row}}^{-1} \frac{\|A^\top e_k^x\|^2_2}{\|A\|_2^2} \geq \mu_{\text{row}}^{-1} \frac{\sigma_{\min}^2(A)}{\|A\|_2^2} \|e_k^x\|^2_2,
\end{equation}
where, $\mu_{\text{row}} = \max_{\mathcal{J}} \frac{\|A^{\mathcal{J}} (A^{\mathcal{J}})^\top\|_2}{\|A^{\mathcal{J}}\|_2^2}$. Hence, using Eq.~\eqref{eq:sol_norm1} and \eqref{eq:sol_norm2},
\begin{equation}\label{eq:sol_norm_part1}
    \mathbb{E}[\|(\mathbf{I} - Q_{\mathcal{J}_k}) e_k^x\|^2_2 \mid \mathcal{F}_k] \leq \gamma_{\text{row}} \|e_k^x\|^2_2,
\end{equation}
with $\gamma_{\text{row}} = 1 -  \mu_{\text{row}}^{-1} \frac{\sigma_{\min}^2(A)}{\|A\|_2^2}$.

\noindent
Now, assuming $\sigma_{\min}(A^{\mathcal{J}_k}) \geq \nu > 0$ for all $\mathcal{J}_k$:
$$\|(A^{\mathcal{J}_k})^\dagger e_{k+1}^{z,\mathcal{J}_k}\|^2_2 \leq \frac{\|e_{k+1}^{z,\mathcal{J}_k}\|^2_2}{\sigma_{\min}^2(A^{\mathcal{J}_k})} \leq \frac{\|e_{k+1}^z\|^2_2}{\nu^2}.$$
Since $\mathcal{J}_k$ is $\mathcal{F}_k$-measurable \footnote{Solution error $e_k^x = x_k - x^\star$ is $\mathcal{F}_k$-measurable because $x_k$ is computed from updates using $\{\mathcal{U}_i, \mathcal{J}_i, z_i, x_i \}_{i=0}^{k-1}$ and the sampling probability for $\mathcal{J}_k$ depends explicitly on $\mathcal{F}_k$-measurable quantities (including $e_k^x$) making it $\mathcal{F}_k$-measurable.}
and $e_{k+1}^z$ is independent of $\mathcal{J}_k$ given $\mathcal{F}_k$:
\begin{equation}\label{eq:sol_norm_part2}
    \mathbb{E}[\|(A^{\mathcal{J}_k})^\dagger e_{k+1}^{z,\mathcal{J}_k}\|^2_2 \mid \mathcal{F}_k] \leq \frac{\mathbb{E}[\|e_{k+1}^z\|^2_2 \mid \mathcal{F}_k]}{\nu^2} \leq \frac{\gamma_{\text{col}}^{k+1} \|z_0 - \mathbf{r}\|^2_2}{\nu^2}.
\end{equation}

\noindent 
Hence, using equations \eqref{eq:sol_norm_evolve}, \eqref{eq:sol_norm_part1} and \eqref{eq:sol_norm_part2} we get,
$$\mathbb{E}[\|e_{k+1}^x\|^2_2 \mid \mathcal{F}_k] \leq \gamma_{\text{row}} \|e_k^x\|^2_2 + \frac{\gamma_{\text{col}}^{k+1} \|z_0 - \mathbf{r}\|^2_2}{\nu^2}.$$

\noindent
Taking the total expectation,
$$\mathbb{E} \|e_{k+1}^x\|^2_2 \leq \gamma_{\text{row}} \mathbb{E} \|e_k^x\|^2_2 + \frac{\gamma_{\text{col}}^{k+1} \|z_0 - \mathbf{r}\|^2_2}{\nu^2}.$$

\noindent
Let $\gamma = \max(\gamma_{\text{row}}, \gamma_{\text{col}})$ i.e., $\gamma_{\text{row}} \leq \gamma$ and $\gamma_{\text{col}} \leq \gamma$, we get,
\begin{equation}\label{eq:solution}
\mathbb{E}[\|e_k^x\|^2_2] \leq \gamma_{\text{row}}^k \|e_0^x\|^2_2 + \frac{\|z_0 - \mathbf{r}\|^2_2}{\nu^2} \gamma_{\text{col}} \sum_{j=0}^{k-1} \gamma_{\text{row}}^{k-1-j} \gamma_{\text{col}}^j \leq \gamma^k \|x_0 - x^\star\|^2_2 + \frac{k \gamma^k \|z_0 - \mathbf{r}\|^2_2}{\nu^2}.
\end{equation}
According to the Lemma \ref{lem:geometric-inequality}, $k \gamma^k \leq \frac{2\gamma^{k/2}}{1-\sqrt{\gamma}}$ for $\gamma \in (0,1)$, using Eq.~\eqref{eq:solution} we get,
\begin{equation}\label{eq:sol_error}
    \mathbb{E}[\|x_k - x^\star\|^2_2] \leq \gamma^k \|x_0 - x^\star\|^2_2 + \frac{2\gamma^{k/2} \|z_0 - \mathbf{r}\|^2_2}{(1-\sqrt{\gamma}) \nu^2}.
\end{equation}
Equations \eqref{eq:res_error} and \eqref{eq:sol_error} complete the required proof.
\end{proof}

\section{Experimental Results}\label{sec:results}
Sequential experiments (including comparisons) were conducted on a machine with Intel(R) Core(TM) $i$9-12900K CPU with 3.20GHz base frequency (Turbo up to 5.20GHz), 32 GB RAM, and 16 cores using NVIDIA GeForce GTX 1660 SUPER GPU with 6 GB of GDDR6 memory. We used MATLAB R2025a to compile and run the codes.

\begin{remark}
    In contrast to the GDBEK algorithm, where CGLS is employed for solving the small least-squares subproblems, we use the LSQR method. This choice is motivated by the observation of Bjorck et al.~\cite{bjorck1998stability}, who noted that CGLS tends to be more efficient than LSQR only when the matrix is well-conditioned, since CGLS relies on normal equations to solve the problem.
\end{remark}

\subsection{Random Matrices}
In all experiments, the coefficient matrix $A$ is generated using the MATLAB built-in function \texttt{sprandn} with $99\%$ sparsity. The termination criterion is based on the Relative Square Error (RSE), defined as,
\begin{equation}
    \mathrm{RSE} = \frac{\|Ax - b\|_2^2}{\|b\|_2^2}.
\end{equation}

For the experiments, the true solution vector $x_{\text{true}}$ is initialized with MATLAB's \texttt{randn} function, and the corresponding right-hand side vector $b$ is computed as $b = A x_{\text{true}} + r$, where $r \in \mathrm{Range}(A^\top)$. The initial solution $x_0$ is set to the zero vector, and the initial residual estimate $z_0$ is initialized as $b$. For all algorithms, the RSE termination threshold is set to $10^{-6}$ and the maximum number of iterations is limited to $400000$. For FGBK, GDBEK, and RGDBEK, the parameter $\eta$ is fixed at $0.5$. Additionally, for FGBK, the parameter $p$ is chosen as $2$.

Table~\ref{tab:fatmatrices} presents the average results for fat matrices ($m < n$), while Table~\ref{tab:tallmatrices} shows the average results for tall matrices ($m > n$). The speedups reported in these tables are calculated relative to the best-performing existing algorithms. It is evident from these tables that the proposed RGDBEK algorithm outperforms existing methods, GRK, FDBK, FGBK, and GDBEK in terms of both CPU time and the number of iterations. This performance advantage is also illustrated in Fig.~\ref{fig:cputime} and Fig.~\ref{fig:cpuiter}. Notably, the RGDBEK algorithm achieves a maximum speedup of $3.90$ times, showing the superiority of the proposed algorithm.

\begin{table}[!ht]
    \centering
    \resizebox{\textwidth}{!}{%
    \begin{tabular}{c c c c c c c}
    \hline
        \multicolumn{2}{c}{\textbf{Matrix Size ($\mathbf{m\times n}$)}} & $\mathbf{500\times 8000}$ & $\mathbf{1000\times 8000}$ & $\mathbf{1500\times 8000}$ & $\mathbf{2000\times 8000}$ & $\mathbf{2500\times 8000}$ \\\hline
        \multirow{3}{*}{GRK~\cite{GRK}} & CPU Time (s) & $24.729417$ & $-$ & $-$ & $-$ & $-$ \\
                              & Iterations & $1347.9$ & $-$ & $-$ & $-$ & $-$  \\
                              & Speedups & $-$ & $-$ & $-$ & $-$ & $-$ \\\hline
        \multirow{3}{*}{FDBK~\cite{FDBK}} & CPU Time (s) & $1.277403$ & $3.563295$ & $7.679426$ & $14.426834$ & $24.431726$ \\
                              & Iterations & $46.6$ & $68.9$ & $94.4$ & $129.3$ & $172.6$ \\
                              & Speedups & $-$ & $-$ & $-$ & $-$ & $-$ \\\hline
        \multirow{3}{*}{FGBK~\cite{FGBK}} & CPU Time (s) & $0.874342$ & $2.414013$ & $5.028953$ & $9.832166$ & $15.812446$ \\
                              & Iterations & $34.6$ & $49.1$ & $66.9$ & $89.0$ & $119.0$ \\
                              & Speedups & $1.00\times$ & $1.00\times$ & $1.00\times$ & $1.00\times$ & $1.00\times$ \\\hline
        \multirow{3}{*}{GDBEK~\cite{GDBEK}} & CPU Time (s) & $1.134633$ & $2.782978$ & $6.288686$ & $13.103065$ & $24.260070$ \\
                              & Iterations & $34.3$ & $52.1$ & $77.7$ & $113.9$ & $155.8$ \\
                              & Speedups & $-$ & $-$ & $-$ & $-$ & $-$ \\\hline
        \multirow{3}{*}{RGDBEK} & CPU Time (s) & $0.360229$ & $0.941294$ & $2.434340$ & $4.563865$ & $6.927544$ \\
                              & Iterations & $12.0$ & $14.0$ & $17.1$ & $20.9$ & $25.5$ \\
                              & Speedups & $2.43\times$ & $2.56\times$ & $2.07\times$ & $2.15\times$ & $2.28\times$ \\\hline
    \end{tabular}}
    \caption{Summary of average results over $10$ runs for random fat matrices ($m<n$) with sparsity $99\%$ with termination condition: $\frac{\|Ax-b\|^2_2}{\|b\|^2_2} \leq 10^{-6}$. (Results shown only if $\text{Time}\leq 100\ sec$ )}
    \label{tab:fatmatrices}
%
\vspace{.5cm}
    \centering
    \resizebox{\textwidth}{!}{%
    \begin{tabular}{c c c c c c c}
    \hline
        \multicolumn{2}{c}{\textbf{Matrix Size ($\mathbf{m\times n}$)}} & $\mathbf{8000\times 500}$ & $\mathbf{8000\times 1000}$ & $\mathbf{8000\times 1500}$ & $\mathbf{8000\times 2000}$ & $\mathbf{8000\times 2500}$ \\\hline
        \multirow{3}{*}{GRK~\cite{GRK}} & CPU Time (s) & $12.982912$ & $64.979697$ & $-$ & $-$ & $-$ \\
                              & Iterations & $882.4$ & $2030.1$ & $-$ & $-$ & $-$  \\
                              & Speedups & $-$ & $-$ & $-$ & $-$ & $-$ \\\hline
        \multirow{3}{*}{FDBK~\cite{FDBK}} & CPU Time (s) & $0.538999$ & $2.016343$ & $4.633323$ & $14.818112$ & $44.701397$ \\
                              & Iterations & $48.1$ & $75.7$ & $99.0$ & $139.8$ & $188.6$ \\
                              & Speedups & $-$ & $-$ & $-$ & $-$ & $-$ \\\hline
        \multirow{3}{*}{FGBK~\cite{FGBK}} & CPU Time (s) & $0.453167$ & $1.369696$ & $3.392065$ & $10.620041$ & $32.757391$ \\
                              & Iterations & $44.1$ & $61.0$ & $84.1$ & $110.2$ & $146.9$ \\
                              & Speedups & $1.00\times$ & $1.00\times$ & $1.00\times$ & $1.00\times$ & $1.00\times$ \\\hline
        \multirow{3}{*}{GDBEK~\cite{GDBEK}} & CPU Time (s) & $0.765653$ & $2.083566$ & $5.303517$ & $15.360687$ & $41.847565$ \\
                              & Iterations & $35.1$ & $52.6$ & $79.3$ & $112.3$ & $156.6$ \\
                              & Speedups & $-$ & $-$ & $-$ & $-$ & $-$ \\\hline
        \multirow{3}{*}{RGDBEK} & CPU Time (s) & $0.401969$ & $0.965721$ & $2.304293$ & $4.766493$ & $8.396985$ \\
                              & Iterations & $11.8$ & $14.1$ & $17.0$ & $$20.6$$ & $24.9$ \\
                              & Speedups & $1.27\times$ & $1.42\times$ & $1.47\times$ & $2.23\times$ & $3.90\times$ \\\hline
    \end{tabular}}
    \caption{Summary of average results over $10$ runs for random tall matrices ($m>n$) with sparsity $99\%$ with termination condition: $\frac{\|Ax-b\|^2_2}{\|b\|^2_2} \leq 10^{-6}$. (Results shown only if $\text{Time}\leq 100\ sec$ )}
    \label{tab:tallmatrices}
\end{table}

\begin{figure}[!ht]
    \centering
    \begin{subfigure}[b]{0.49\textwidth}
        \includegraphics[width=\textwidth]{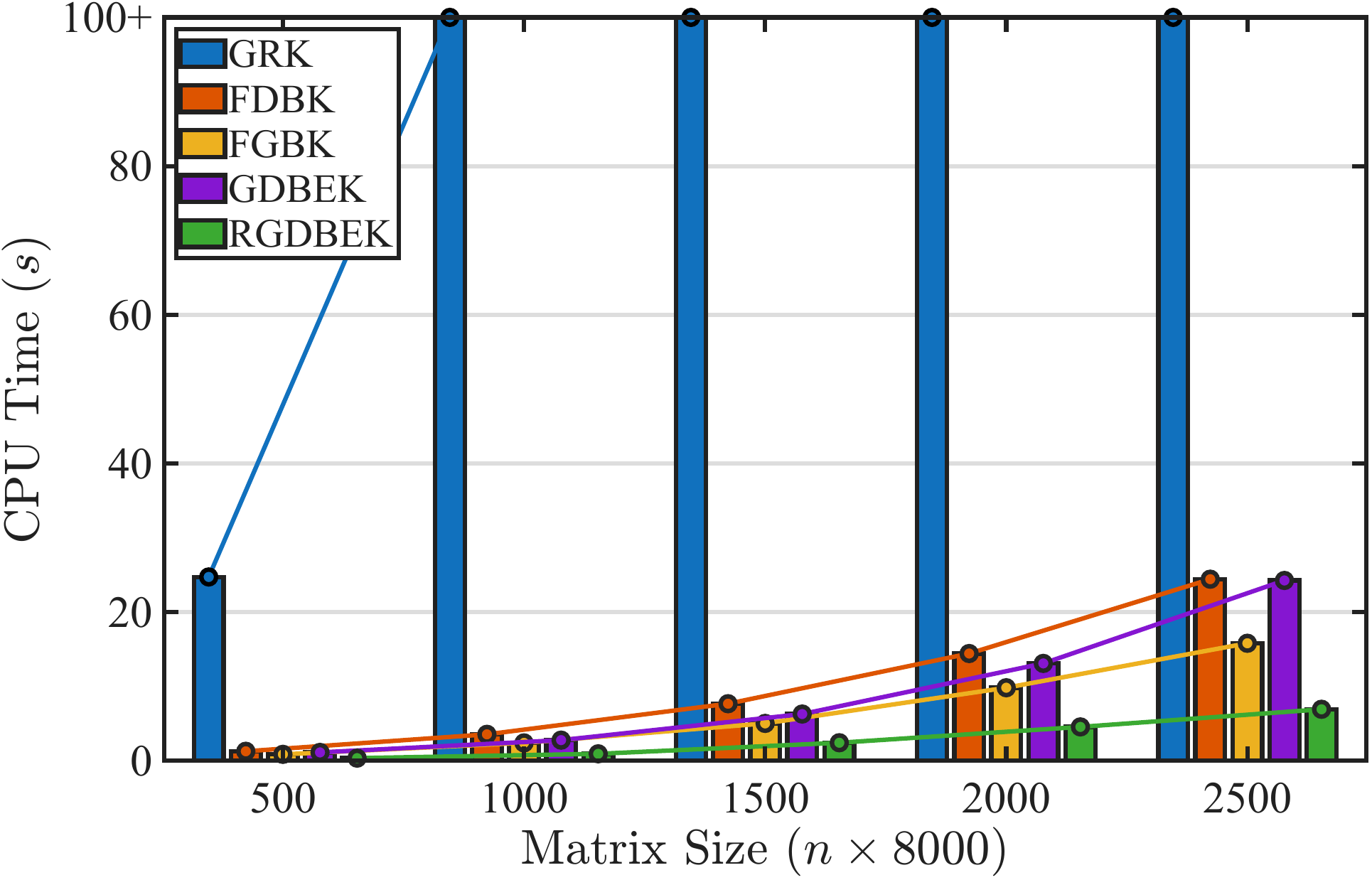}
        \caption{Sparse random fat matrices}
    \end{subfigure}
    \hfill
    \begin{subfigure}[b]{0.49\textwidth}
        \includegraphics[width=\textwidth]{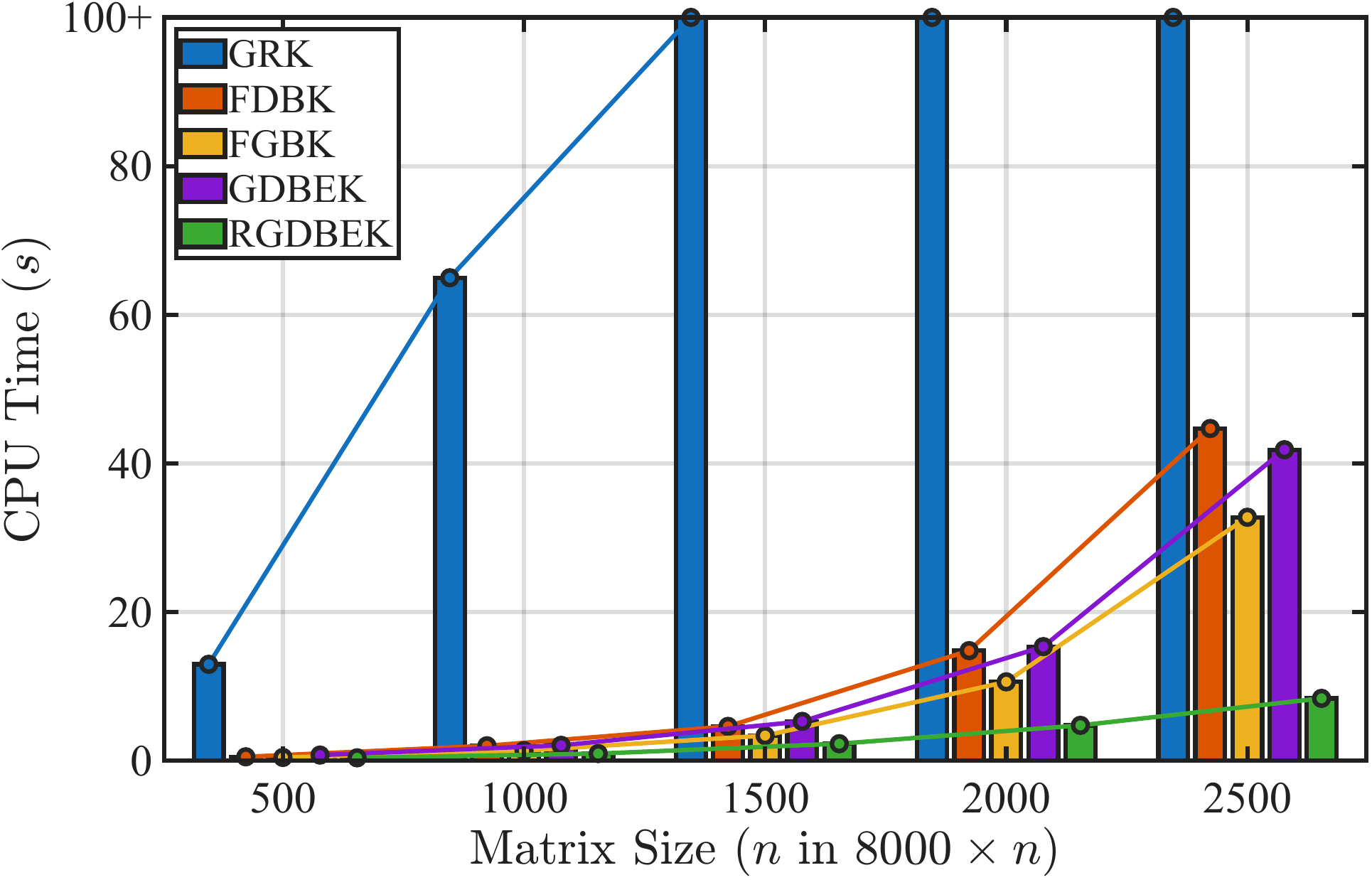}
        \caption{Sparse random tall matrices}
    \end{subfigure}
    \caption{CPU Time comparison using results given in Table~\ref{tab:fatmatrices} and Table~\ref{tab:tallmatrices}.}
    \label{fig:cputime}
%
\vspace{.75cm}
    \begin{subfigure}[b]{0.49\textwidth}
        \includegraphics[width=\textwidth]{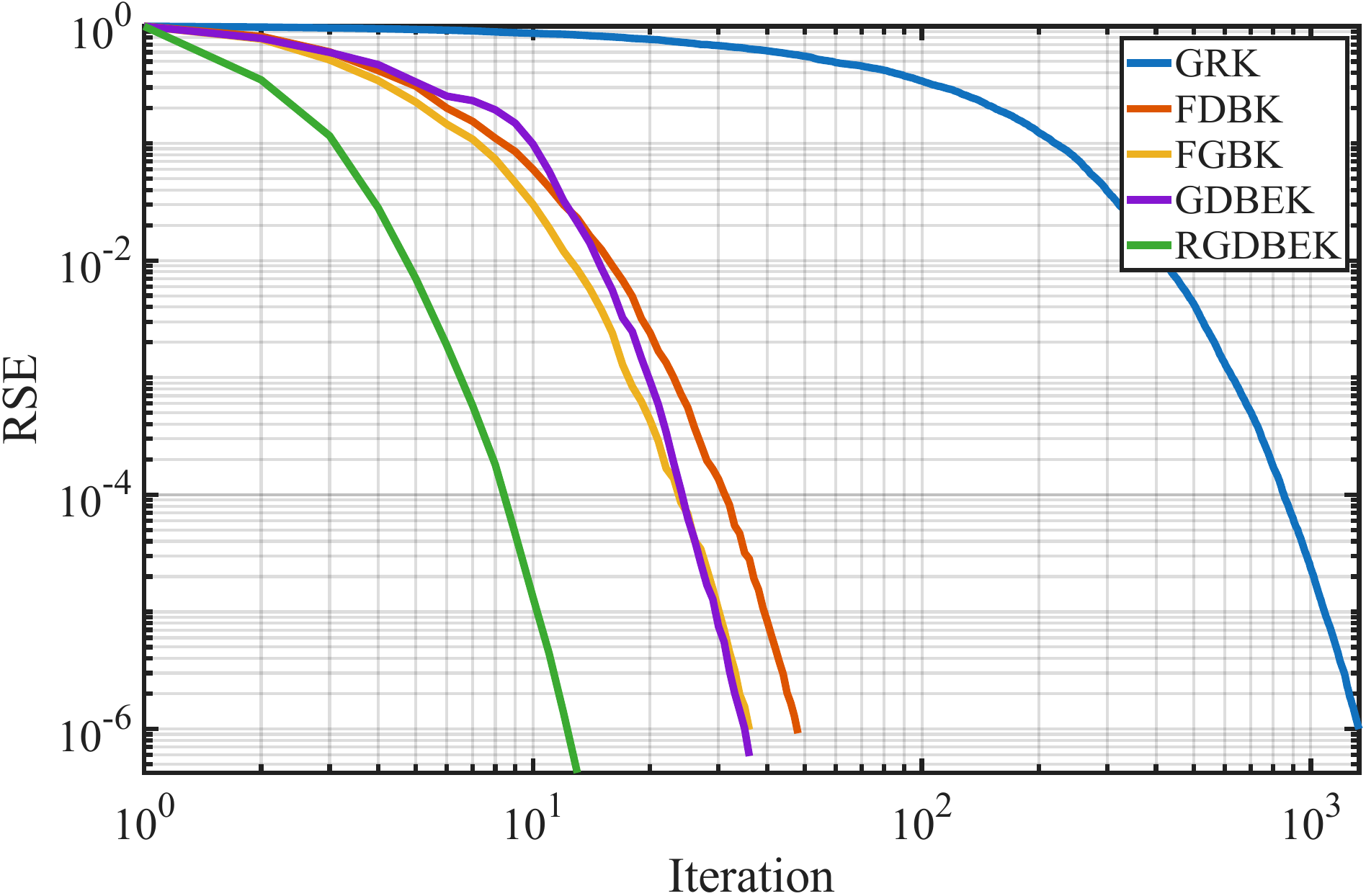}
        \caption{Relative Residual vs Iteration for $m=500$, $n=8000$}
    \end{subfigure}
    \hfill
    \begin{subfigure}[b]{0.49\textwidth}
        \includegraphics[width=\textwidth]{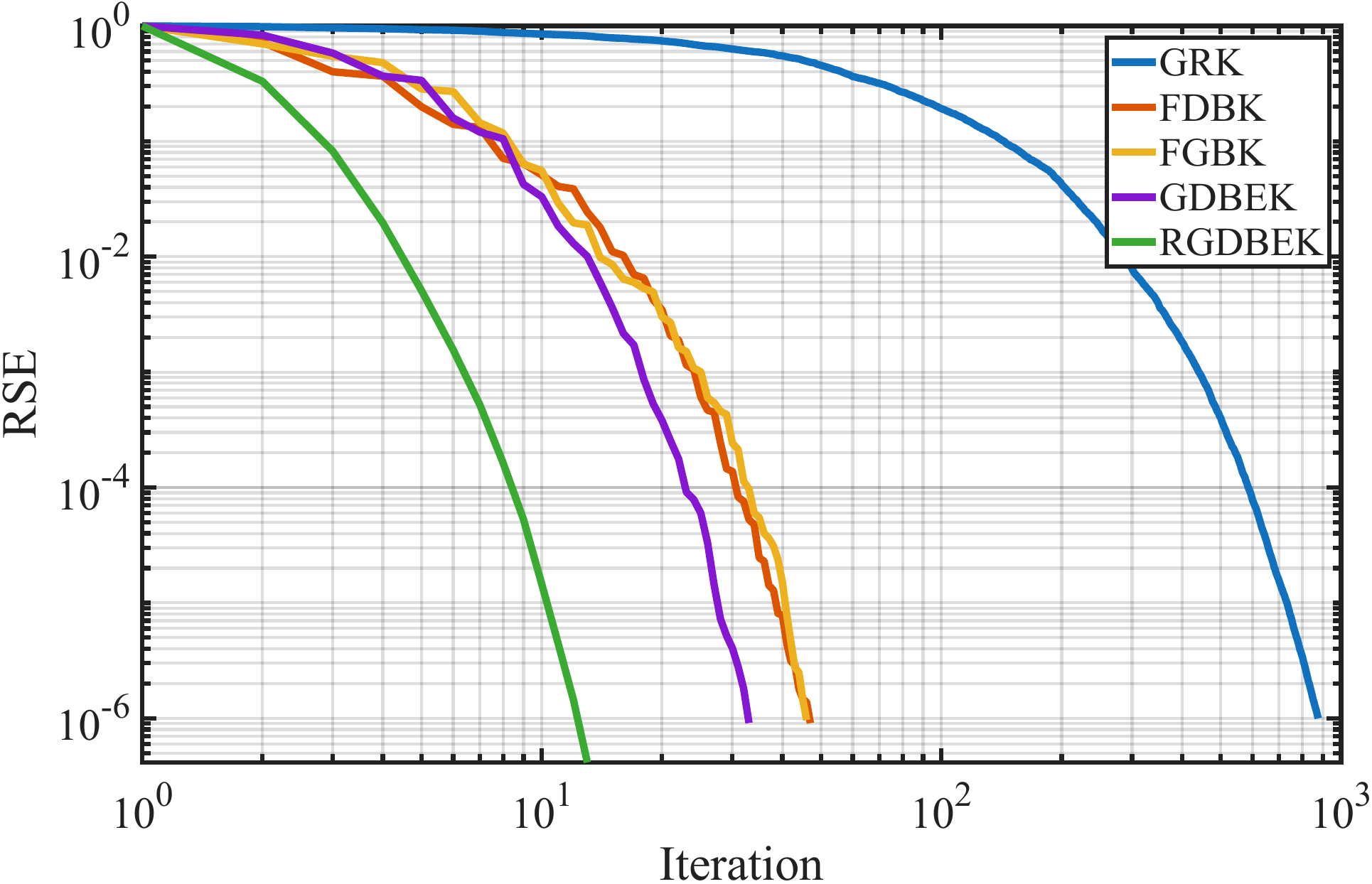}
        \caption{Relative Residual vs Iteration for $m=8000$, $n=500$}
    \end{subfigure}
    \caption{Convergence over iterations analysis using results given in Table~\ref{tab:fatmatrices} and Table~\ref{tab:tallmatrices}.}
    \label{fig:cpuiter}
\end{figure}

\subsection{SuiteSparse Matrix Collection}
In addition to experiments on randomly generated matrices, we conducted comparisons on matrices from the \texttt{SuiteSparse} matrix collection~\cite{suitesparse} with various sizes and sparsity patterns. The parameter settings for these experiments were kept consistent with those used in the random matrix tests. Furthermore, the vectors $x_{\text{true}}$, $b$, $x_0$, and $z_0$ were initialized following the same procedures as previously described.

Table~\ref{tab:suitesparse} presents the average results for various matrices from the \texttt{SuiteSparse} matrix collection~\cite{suitesparse}. The speedups reported in this table are calculated relative to the best-performing existing algorithms. It can be observed that the proposed RGDBEK algorithm outperforms the existing methods GRK, FDBK, FGBK, and GDBEK in terms of both CPU time and the number of iterations. This performance advantage is further illustrated in Fig.~\ref{fig:suitesparse}. Notably, the RGDBEK algorithm achieves a maximum speedup of $2.53$ times, demonstrating the superiority of the proposed approach. The only exception is observed for the \texttt{WorldCities} matrix, which shows a reduced speedup of $0.34$ times, potentially due to its small size, limiting the observable acceleration.

\begin{table}[!ht]
    \centering
    \resizebox{\textwidth}{!}{%
    \begin{tabular}{c c c c c c c}
    \hline
        \multicolumn{2}{c}{\textbf{Matrix}} & \textbf{WorldCities} & \textbf{lp\_agg2} & \textbf{GL7d26} & \textbf{GD96\_a} & \textbf{GL7d12} \\\hline
        \multicolumn{2}{c}{\textbf{Size ($\mathbf{m\times n}$)}} & $315\times 100$ & $516\times 758$ & $305\times 2798$ & $1096\times 1096$ & $8899\times 1019$ \\
        \multicolumn{2}{c}{\textbf{Rank}} & $100$ & $516$ & $273$ & $827$ & $960$ \\
        \multicolumn{2}{c}{\textbf{Sparsity}} & $76.13\%$ & $98.79\%$ & $99.13\%$ & $99.86\%$ & $99.59\%$ \\
        \multicolumn{2}{c}{\textbf{Condition Number}} & $65.999$ & $589.751$ & $\infty$ & $\infty$ & $3.792\times 10^{17}$ \\\hline
        \multirow{3}{*}{GRK~\cite{GRK}} & CPU Time (s) & $0.276238$ & $0.935025$ & $0.949980$ & $4.522051$ & $73.667474$ \\
                              & Iterations & $3239.6$ & $1458.0$ & $670.9$ & $2265.2$ & $2447.1$ \\
                              & Speedups & $-$ & $-$ & $-$ & $-$ & $-$ \\\hline
        \multirow{3}{*}{FDBK~\cite{FDBK}} & CPU Time (s) & $0.164150$ & $0.160193$ & $0.290375$ & $0.647073$ & $4.144814$ \\
                              & Iterations & $1694.6$ & $236.2$ & $138.6$ & $368.8$ & $150.2$ \\
                              & Speedups & $-$ & $-$ & $-$ & $-$ & $-$ \\\hline
        \multirow{3}{*}{FGBK~\cite{FGBK}} & CPU Time (s) & $0.138817$ & $0.132544$ & $0.252340$ & $0.531691$ & $3.290251$ \\
                              & Iterations & $1719.8$ & $213.3$ & $120.2$ & $313.1$ & $142.0$ \\
                              & Speedups & $1.00\times$ & $-$ & $-$ & $1.00\times$ & $-$ \\\hline
        \multirow{3}{*}{GDBEK~\cite{GDBEK}} & CPU Time (s) & $0.263940$ & $0.097652$ & $0.128780$ & $0.563247$ & $3.181373$ \\
                              & Iterations & $846.9$ & $85.6$ & $36.1$ & $197.9$ & $72.6$ \\
                              & Speedups & $-$ & $1.00\times$ & $1.00\times$ & $-$ & $1.00\times$ \\\hline
        \multirow{3}{*}{RGDBEK} & CPU Time (s) & $0.407082$ & $0.090252$ & $0.070001$ & $0.384606$ & $1.256719$ \\
                              & Iterations & $561.8$ & $32.1$ & $10.9$ & $67.2$ & $15.0$ \\
                              & Speedups & $0.34\times$ & $1.08\times$ & $1.84\times$ & $1.38\times$ & $2.53\times$ \\\hline
    \end{tabular}}
    \caption{Summary of average results over $10$ runs for matrices from \texttt{SuiteSparse} matrix collection~\cite{suitesparse} with termination condition: $\frac{\|Ax-b\|^2_2}{\|b\|^2_2} \leq 10^{-6}$.}
    \label{tab:suitesparse}
\end{table}

\begin{figure}[!ht]
    \centering
    \begin{subfigure}[b]{0.49\textwidth}
        \includegraphics[width=\textwidth]{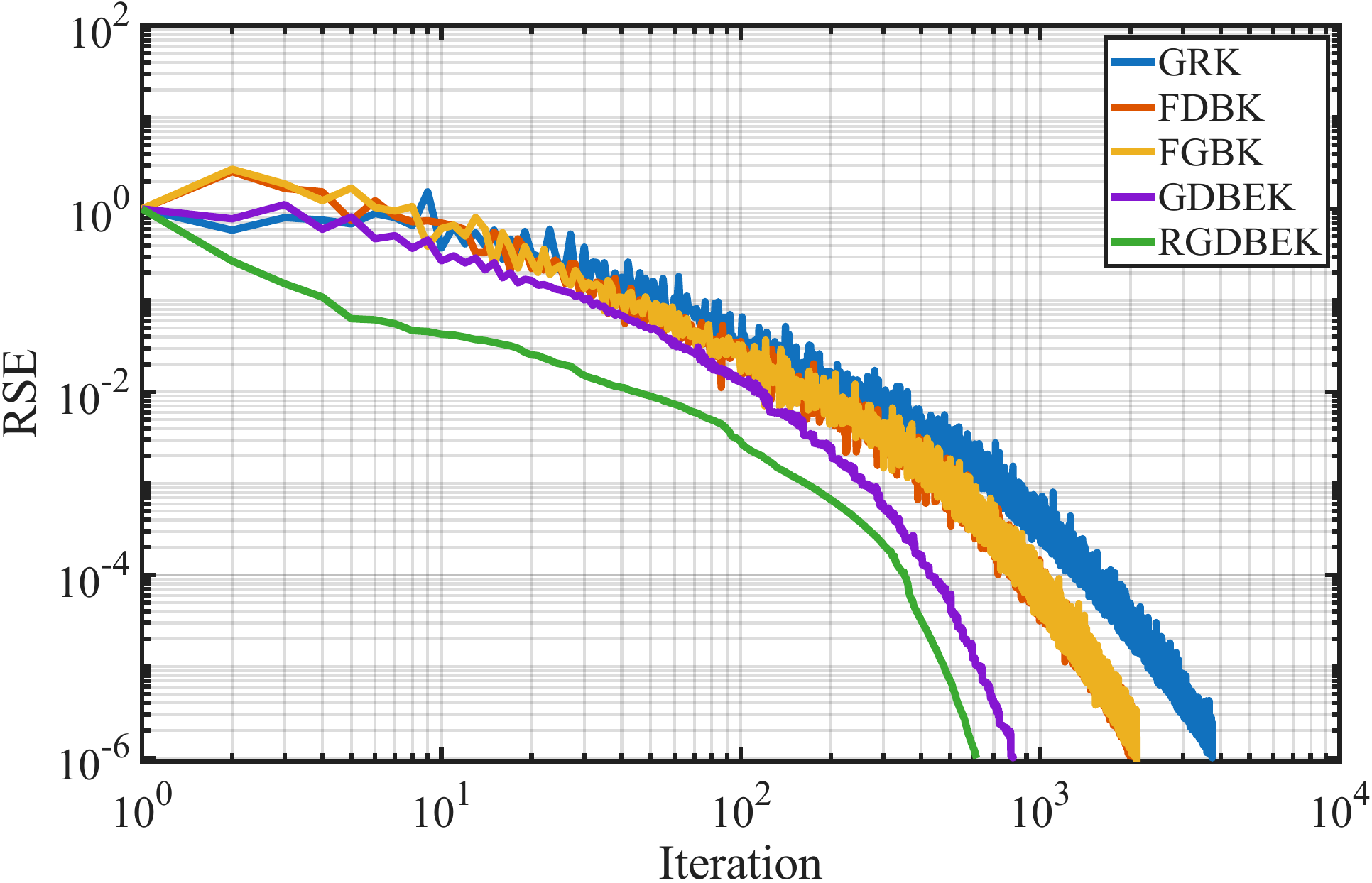}
        \caption{\texttt{WorldCities.mat} matrix of size $315\times 100$}
    \end{subfigure}
    \hfill
    \begin{subfigure}[b]{0.49\textwidth}
        \includegraphics[width=\textwidth]{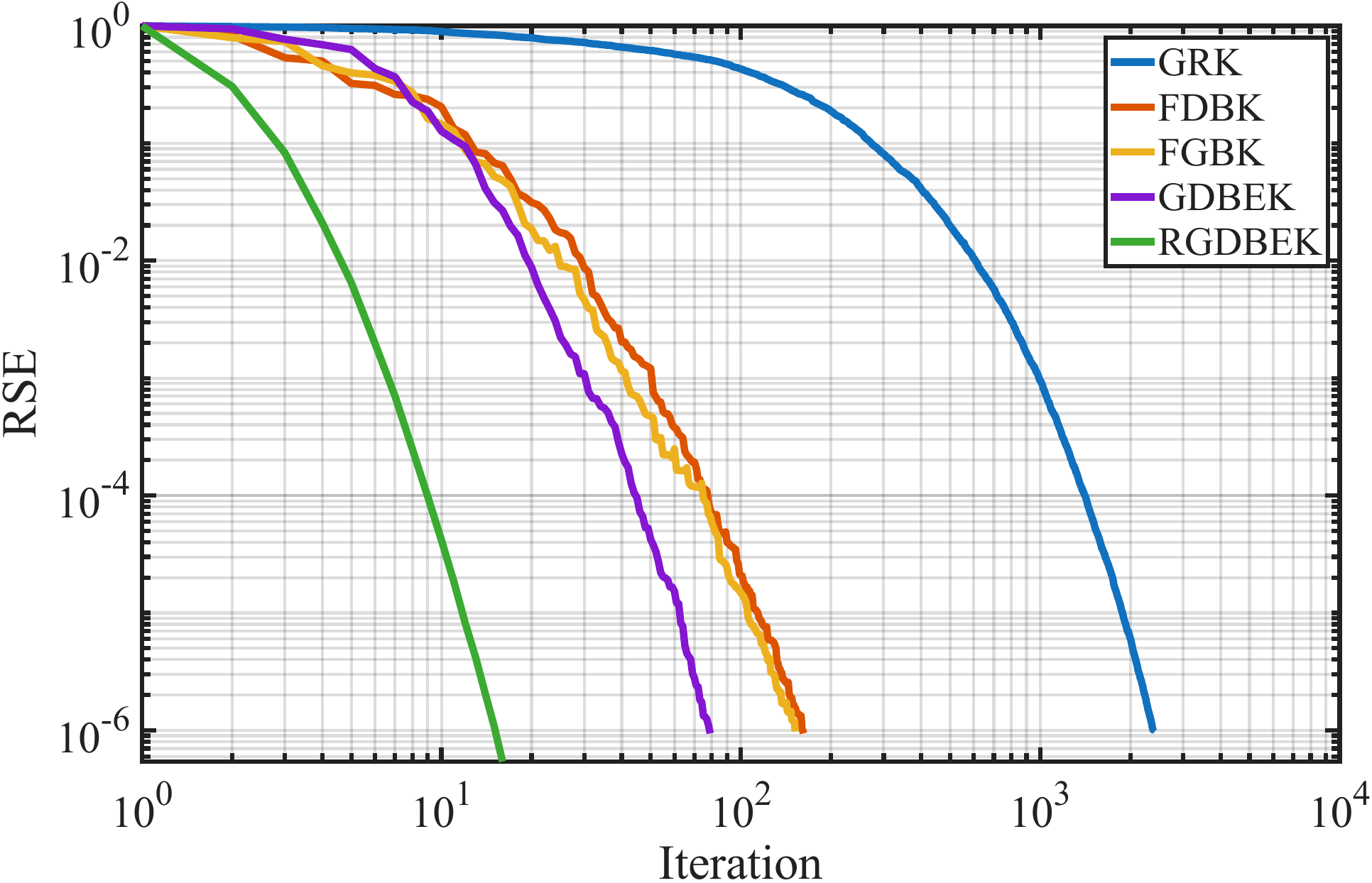}
        \caption{\texttt{GL7d12.mat} matrix of size $8899\times 1019$}
    \end{subfigure}
    \caption{Convergence over iterations analysis using results given in Table~\ref{tab:suitesparse}.}
    \label{fig:suitesparse}
\end{figure}

\section{Parallel Architecture for RGDBEK}\label{sec:PRGDBEK}
In practical scenarios, most matrices are inherently sparse, making sparse computations significantly more important than dense computations. It is important to note that most iterative methods spend the majority of their computational time performing sparse matrix-vector multiplication (SpMV) operations because iterative solvers often require hundreds or thousands of iterations to converge. Hence, optimizing SpMV kernels is critical to enhancing the overall performance of sparse solvers. To address this challenge, we adopt the \textit{Block Strategy and Adaptive Storage (BSAS)} format recently proposed by Zhao et al.~\cite{zhao2025block}, which integrates multiple existing formats, including COO, ELL, CSR, and hybrid schemes based on adaptive threshold criteria. This approach ensures workload balance, minimizes redundant calculations, and significantly enhances memory utilization efficiency. Hence, all SpMV operations are performed on the GPU using the BSAS storage format.

The parallel RGDBEK algorithm, presented in Algorithm~\ref{alg:rgdbek_bsas}, solves a large sparse linear system $Ax = b$ distributed across $P$ processes (MPI ranks), leveraging GPU-accelerated SpMV. The global matrix $A$ of size $m \times n$ is partitioned row-wise among the processors, with each rank assigned a contiguous block of rows until the number of non-zero entries is almost equally divided. The right-hand side vector $b$ is broadcast to all processors to ensure global accessibility. 

\begin{remark}
    All the Sparse Matrix Vector multiplications (SpMV) performed on GPU are placed inside \boxed{~} in the Algorithm \ref{alg:rgdbek_bsas}. 
\end{remark}

At each iteration, the parallel RGDBEK algorithm advances through two primary phases to iteratively refine the solution vector $x$. Initially, each processor computes its local contribution $(A^\top z)^{(p)} = (A^{(p)})^\top z_k^{(p)}$, where $z_k^{(p)}$ denotes the local auxiliary residual vector on rank $p$. These local contributions are aggregated across all processors using an \texttt{MPI\_AllReduce} operation to form the global vector $A^\top z_k$. A probability distribution over columns is then constructed, where each column $j$ is weighted by the squared magnitude $|(A^\top z)_j|^2$ normalized by the squared Euclidean norm of the column $\|A_{(:,j)}\|_2^2$. From this distribution, a subset $\mathcal{U}_k$ containing $n\eta$ columns is sampled. Each processor extracts the corresponding local submatrix $A^{(p)}_{\mathcal{U}_k}$ and solves a small local least squares problem to obtain a correction vector $z_{\mathrm{sol}}$. The auxiliary vector update is performed locally as $z_{k+1}^{(p)} = z_k^{(p)} - A^{(p)}_{\mathcal{U}_k} z_{\mathrm{sol}}$. In the second phase, the residual vector on each processor is computed as $r^{(p)} = b^{(p)} - z_{k+1}^{(p)} - A^{(p)} x_k$. A row-wise probability distribution is constructed where each row $i$ is weighted by $|r_i^{(p)}|^2$ normalized by the row norm $\|A^{(p)}_{(i,:)}\|_2^2$. Each processor samples $d\eta$ rows $\mathcal{J}_k$ from this distribution, extracts the corresponding row submatrix $A^{(p)}_{\mathcal{J}_k}$ and subvector $b^{(p)}_{\mathcal{J}_k}$, and solves a local least squares correction problem for $x_{\mathrm{update}}^{(p)}$. The global solution vector is updated collectively using an \texttt{MPI\_AllReduce} with a lazy averaging step. After each iteration, the root processor evaluates the RSE. If the RSE is below the prescribed tolerance, a termination signal is broadcast to all ranks to break the computation. Finally, the algorithm returns the global solution vector $x_T$ after convergence or reaching the maximum iterations. This process involves two principal synchronization points via \texttt{MPI\_AllReduce} calls for data aggregation and solution update, as illustrated in Fig.~\ref{fig:mpiflow}.

Efficient data transfer management between the CPU and GPU is critical for optimizing the performance of SpMV within the proposed RGDBEK algorithm. During each iteration $k$, intermediate vectors updated on the GPU, such as the residuals $r^{(p)}$, the auxiliary vectors $z_k^{(p)}$, and the partial solutions $x_k^{(p)}$, are kept on the GPU as much as possible to avoid frequent CPU-GPU data transfers. Only essential data, such as the sampled column submatrix solutions $z_{\mathrm{sol}}$ associated with $\mathcal{U}_k$ and the row submatrix updates $x_{\mathrm{update}}^{(p)}$ related to $\mathcal{J}_k$, are transferred back to the CPU for interprocess communication and global synchronization using \texttt{MPI\_AllReduce}. This strategy minimizes communication bottlenecks and supports overlapping computation with data transfers, thus promoting convergence efficiency in large-scale sparse linear systems.

\begin{algorithm}
\caption{Parallel version of RGDBEK Algorithm $\boxed{\texttt{GPU SpMV}}$}
\label{alg:rgdbek_bsas}
\begin{algorithmic}[1]
\State \textbf{Input:} $A \in \mathbb{R}^{m \times n}$, $b \in \mathbb{R}^m$, iterations $T$, initial guess $x_0 = 0 \in \mathbb{R}^n$, $z_0 = b \in \mathbb{R}^m$, parameter $\eta$, MPI distributed environment with $P$ processors
\State \textbf{Output:} $x_T$
\State \textbf{On rank 0:} Partition $A$ rows for all ranks as $A^{(p)}$, $p = 0 \ldots P-1$ and send each $A^{(p)}$ to rank $p$
\State \textbf{On all ranks:} Receive $A^{(p)} \in \mathbb{R}^{d \times n}$, where $d = m/p$
\State \textbf{Broadcast} $b\in\mathbb{R}^m$ to all ranks (\texttt{MPI\_Bcast})
\For{$k = 0, 1, \dots, T-1$}
    \State Each processor computes: $\left(A^\top z\right)^{(p)} = \boxed{(A^{(p)})^{\top} z_k^{(p)}}$
    \State \texttt{MPI\_AllReduce}: $A^\top z = \sum_{p=1}^P \left(A^\top z\right)^{(p)}$
    \State Compute:
    $\epsilon_j^z = \frac{\left|\left(A^\top z\right)_j\right|^2}{\left\|A_{(:,j)}\right\|_2^2}, \quad
      P_j = \frac{\epsilon_j^z}{\sum_j \epsilon_j^z}$
    \State Sample $n \eta$ columns using $P$ which forms set $\mathcal{U}_k$
    \State Each processor extracts submatrix $A^{(p)}_{\mathcal{U}_k}$ locally
    \State Solve local least squares problem using LSQR:
      $$z_{\mathrm{sol}} = \arg\min_y \left\|A^{(p)}_{\mathcal{U}_k} y - z_k^{(p)} \right\|_2$$
    \State Update $z$ locally:
      $$z_{k+1}^{(p)} = z_k^{(p)} - \boxed{A^{(p)}_{\mathcal{U}_k} z_{\mathrm{sol}}}$$

    \State Compute residual locally: $r^{(p)} = b^{(p)} - z_{k+1}^{(p)} - \boxed{A^{(p)} x_k}$
    \State Compute: $\epsilon_i^x = \frac{\left|r_i^{(p)}\right|^2}{\left\|A^{(p)}_{(i,:)}\right\|_2^2}, \quad P_i = \frac{\epsilon_i^x}{\sum_i \epsilon_i^x}$
    \State Sample $d \eta$ rows $\mathcal{J}_k$ according to $P$
    \State Extract row submatrix $A^{(p)}_{\mathcal{J}_k}$ and vector $b^{(p)}_{\mathcal{J}_k}$ locally
    \State Solve local least squares problem using LSQR:
      $$x^{(p)}_{\mathrm{update}} = \arg\min_y \left\| A^{(p)}_{\mathcal{J}_k} y - \left(b^{(p)}_{\mathcal{J}_k} - z_{k+1}^{(p)} - \boxed{A^{(p)}_{\mathcal{J}_k} x_k} \right) \right\|_2$$
    \State Update Solution using Lazy Approximation using MPI (\texttt{MPI\_AllReduce}):
      $$x_{k+1} = x_k + \frac{1}{P}\sum_{p=1}^Px^{(p)}_{\mathrm{update}}$$

    \If{Root Processor}
      \State Compute RSE: 
      $$\frac{\| b - \boxed{A x_{k+1}} \|_2^2}{\| b \|_2^2}$$
      \If{$RSE < \mathrm{tolerance}$}
        Broadcast \texttt{Terminate}
      \EndIf
    \EndIf
    \If{Terminate}
      Break
    \EndIf
\EndFor
\State \textbf{Return} $x_T$
\end{algorithmic}
\end{algorithm}

\begin{figure}
    \centering
    \includegraphics[width=\linewidth]{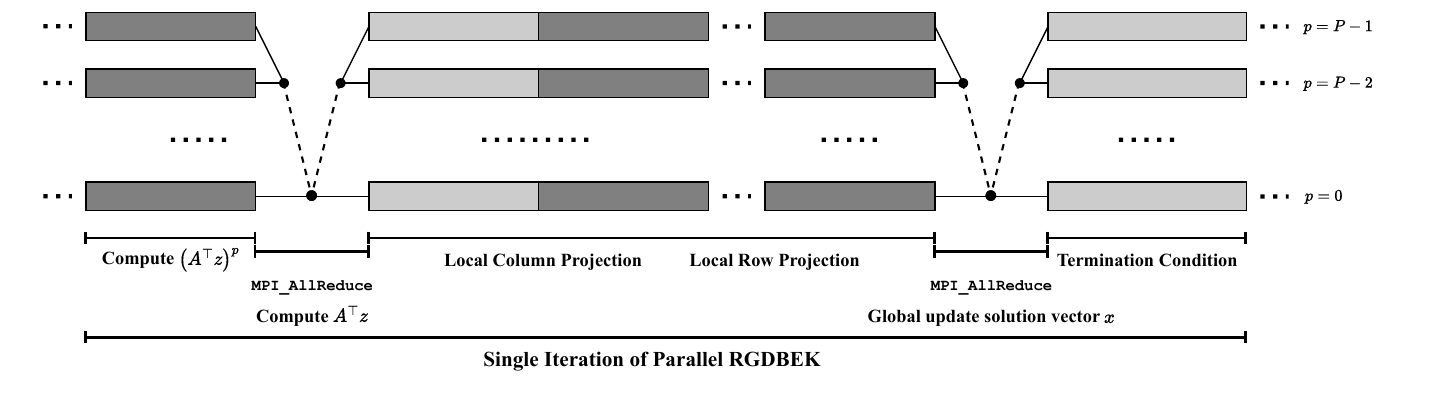}
    \caption{MPI Execution architecture for Parallel version of RGDBEK}
    \label{fig:mpiflow}
\end{figure}

\section{Parallel Experimental Results}\label{sec:ParallelResults}
Parallel architecture experiments (including comparisons) were conducted on a machine with Intel(R) Core(TM) $i$9-14900K CPU with 3.20GHz base frequency (Turbo up to 6.00GHz), 32 GB RAM, and 32 cores using NVIDIA RTX 4500 ADA Generation, 24570 GPU with 24 GB of GDDR6 memory. All parallel codes are created in Python 3.10.13 and executed using CUDA 12.4.131 and MPI 4.0.2. Python libraries used are \texttt{mpi4py} (3.1.3), \texttt{numpy} (1.26.4), \texttt{scipy} (1.15.3), and \texttt{pycuda} (2022.2.2).

In all experiments, the coefficient matrix $A$ is generated using the \texttt{scipy.sparse.rand} function with $99\%$ sparsity. The true solution vector $x_{\text{true}}$ is initialized with \texttt{numpy.random.rand} function. For all algorithms, the RSE termination threshold is set to $10^{-4}$ and the maximum number of iterations is limited to $500$. All the remaining settings are the same as sequential experiments.

Table~\ref{tab:parSpeedupsm} presents the average results for random matrices with a varying number of rows $m$. We observe notable speedups as the number of processes increases from $1$ to $4$ for all matrices. However, when the process count increases from $4$ to $8$, performance gains stagnate or degrade. This behavior can be attributed to the two mandatory global synchronization steps, one during the column projection step and another during the update of the global solution vector $\mathbf{x}$. These synchronization points create a bottleneck that limits scalability, and according to Algorithm~\ref{alg:rgdbek}, both steps are essential and cannot be removed. Despite this, the observed speedups up to $4$ processes (see Fig.~\ref{fig:parm:speedup}) surpass the ideal linear speedups of $2.00$ times and $4.00$ times. This superlinear speedup is due to lazy approximations partially addressing the seesaw effect during the global solution update, reducing the number of iterations required for convergence. Fig.~\ref{fig:parm:it} illustrates that increasing the number of processes decreases the iterations to reach the target threshold, thereby reducing total execution time beyond the expected linear scaling. This synergy between parallelism and algorithmic convergence leads to the observed high-performance improvements.

\begin{table}[!ht]
    \centering
    \resizebox{\textwidth}{!}{%
    \begin{tabular}{c c c c c c c}
    \hline
        \multicolumn{2}{c}{\textbf{Matrix Size ($\mathbf{m\times n}$)}} & $\mathbf{10000\times 20000}$ & $\mathbf{20000\times 20000}$ & $\mathbf{30000\times 20000}$ & $\mathbf{40000\times 20000}$ & $\mathbf{50000\times 20000}$ \\\hline
        \multirow{3}{*}{$1$ Process} & Iterations & $112.3$ & $383.5$ & $277.6$ & $205.7$ & $179.6$ \\
                              & Time (s) & $9.5339$ & $54.9513$ & $58.2495$ & $54.8381$ & $64.9093$ \\
                              & Speedups & $1.000\times$ & $1.000\times$ & $1.000\times$ & $1.000\times$ & $1.000\times$ \\\hline
        \multirow{3}{*}{$2$ Processes} & Iterations & $52.3$ & $102.7$ & $122.2$ & $144.7$ & $109.5$ \\
                              & Time (s) & $3.5570$ & $9.5441$ & $14.5025$ & $20.6103$ & $18.5490$ \\
                              & Speedups & $2.680\times$ & $5.758\times$ & $4.016\times$ & $2.661\times$ & $3.499\times$ \\\hline
        \multirow{3}{*}{$4$ Processes} & Iterations & $48.5$ & $76.3$ & $79.3$ & $75.0$ & $80.7$ \\
                              & Time (s) & $3.7646$ & $7.1169$ & $8.5717$ & $9.2071$ & $11.0228$ \\
                              & Speedups & $2.532\times$ & $7.721\times$ & $6.796\times$ & $5.956\times$ & $5.889\times$ \\\hline
        \multirow{3}{*}{$8$ Processes} & Iterations & $94.9$ & $76.5$ & $66.3$ & $65.3$ & $59.7$ \\
                              & Time (s) & $15.3341$ & $9.8432$ & $9.0941$ & $9.7216$ & $9.5194$ \\
                              & Speedups & $0.622\times$ & $5.558\times$ & $6.405\times$ & $5.641\times$ & $6.819\times$ \\\hline
    \end{tabular}}
    \caption{Summary of average results over $10$ runs for random matrices (change in $m$) with $99\%$ sparsity with termination condition: $\mathrm{RSE} \leq 10^{-4}$, and $\eta = 0.1$.}
    \label{tab:parSpeedupsm}
\end{table}

\begin{figure}[!ht]
    \centering
    \begin{subfigure}[b]{0.49\textwidth}
        \includegraphics[width=\textwidth]{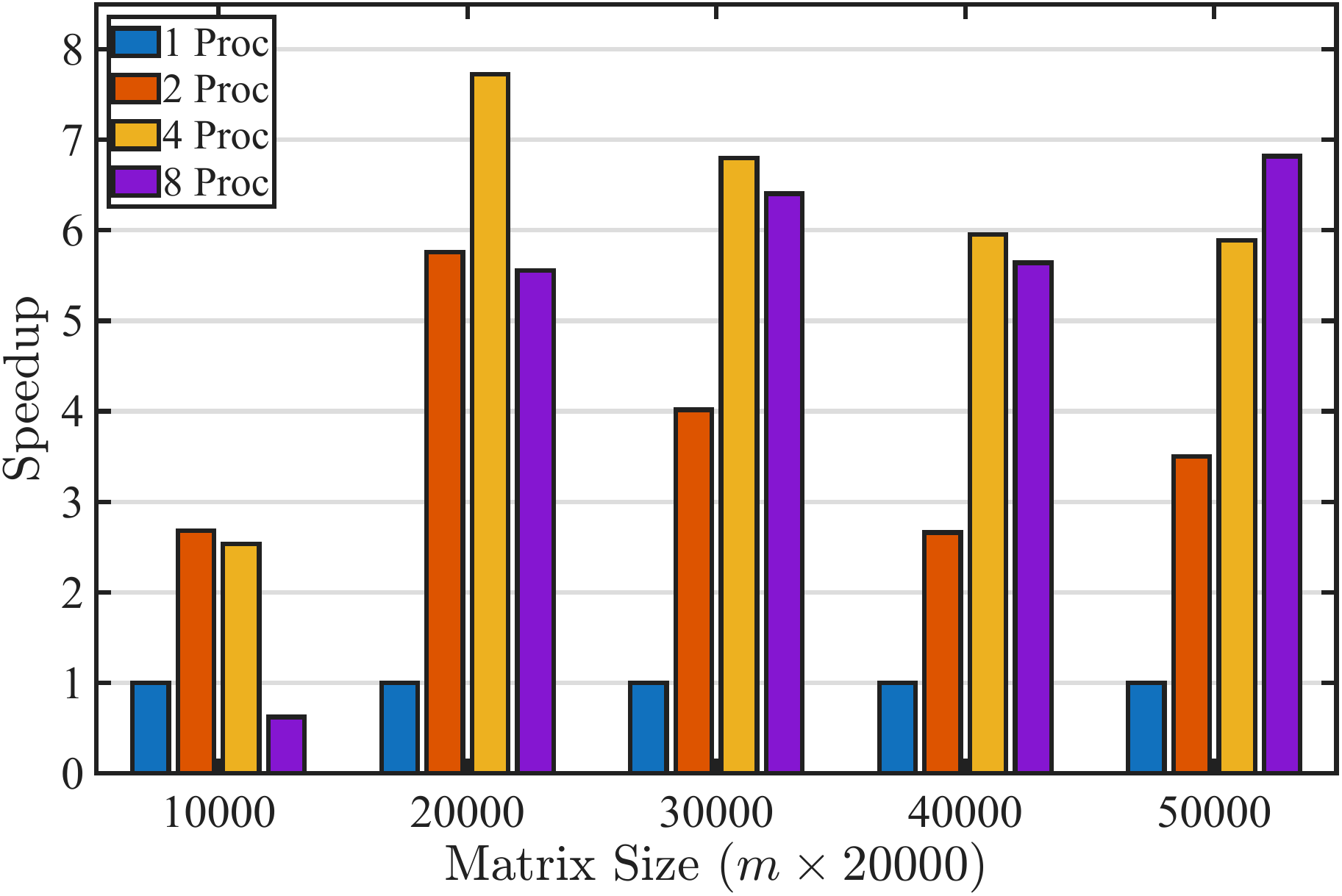}
        \caption{Speedups over different numbers of processes}
        \label{fig:parm:speedup}
    \end{subfigure}
    \hfill
    \begin{subfigure}[b]{0.49\textwidth}
        \includegraphics[width=\textwidth]{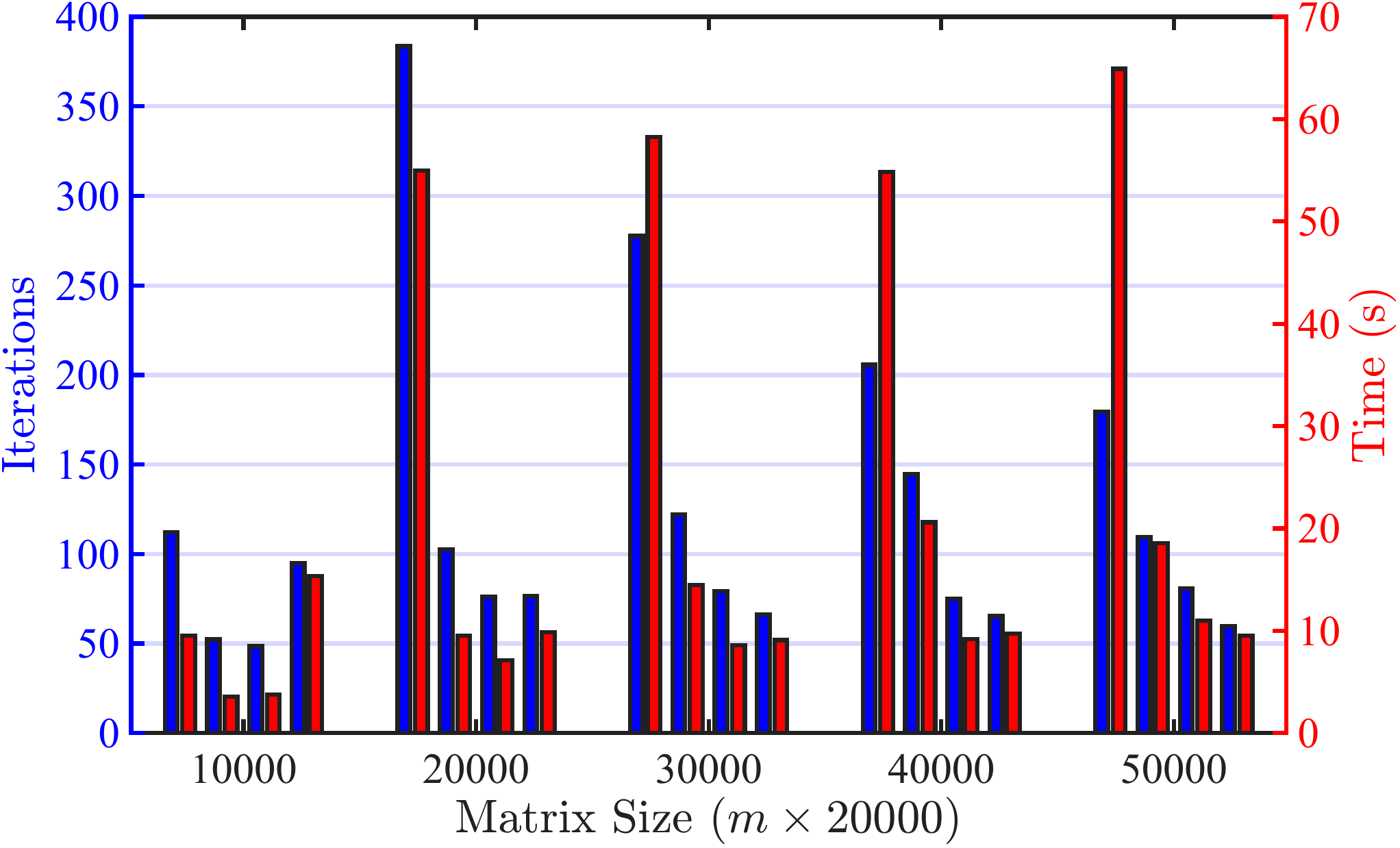}
        \caption{Iterations and Time dependence analysis}
        \label{fig:parm:it}
    \end{subfigure}
    \caption{Time, Iterations, and Speedups comparisons using results given in Table~\ref{tab:parSpeedupsm}.}
    \label{fig:parm}
\end{figure}

On similar grounds, Table~\ref{tab:parSpeedupsn} presents the average results for random matrices with a varying number of columns $n$. Consistent with the previous observations, we see significant speedups as the process count increases from $1$ to $4$, followed by stagnation or degraded performance when scaling from $4$ to $8$ processes. Fig.~\ref{fig:parn:speedup} and \ref{fig:parn:it} illustrate the speedup trends and the time-iteration relationship, respectively, for different matrix sizes. The explanation for the superlinear speedups up to $4$ processes and the subsequent stagnation is the same reasoning described earlier.

\begin{table}[!ht]
    \centering
    \resizebox{\textwidth}{!}{%
    \begin{tabular}{c c c c c c c}
    \hline
        \multicolumn{2}{c}{\textbf{Matrix Size ($\mathbf{m\times n}$)}} & $\mathbf{20000\times 10000}$ & $\mathbf{20000\times 20000}$ & $\mathbf{20000\times 30000}$ & $\mathbf{20000\times 40000}$ & $\mathbf{20000\times 50000}$ \\\hline
        \multirow{3}{*}{$1$ Process} & Iterations & $157.3$ & $383.5$ & $197.8$ & $131.2$ & $101.4$ \\
                              & Time (s) & $14.0463$ & $54.9512$ & $43.1847$ & $37.2935$ & $37.8090$ \\
                              & Speedups & $1.000\times$ & $1.000\times$ & $1.000\times$ & $1.000\times$ & $1.000\times$ \\\hline
        \multirow{3}{*}{$2$ Processes} & Iterations & $166.8$ & $102.7$ & $80.8$ & $64.7$ & $56.0$ \\
                              & Time (s) & $10.0689$ & $9.5441$ & $10.1979$ & $10.4816$ & $11.4067$ \\
                              & Speedups & $1.395\times$ & $5.758\times$ & $4.235\times$ & $3.558\times$ & $3.315\times$ \\\hline
        \multirow{3}{*}{$4$ Processes} & Iterations & $68.8$ & $76.3$ & $63.3$ & $56.2$ & $45.2$ \\
                              & Time (s) & $4.7086$ & $7.1169$ & $7.3365$ & $8.4748$ & $8.6456$ \\
                              & Speedups & $2.983\times$ & $7.721\times$ & $5.886\times$ & $4.401\times$ & $4.373\times$ \\\hline
        \multirow{3}{*}{$8$ Processes} & Iterations & $58.7$ & $76.5$ & $69.0$ & $61.1$ & $55.1$ \\
                              & Time (s) & $9.2192$ & $9.8432$ & $8.6867$ & $9.0392$ & $9.5734$ \\
                              & Speedups & $1.524\times$ & $5.558\times$ & $4.971\times$ & $4.126\times$ & $3.949\times$ \\\hline
    \end{tabular}}
    \caption{Summary of average results over $10$ runs for random matrices (change in $n$) with $99\%$ sparsity with termination condition: $\mathrm{RSE} \leq 10^{-4}$, and $\eta = 0.1$.}
    \label{tab:parSpeedupsn}
\end{table}

\begin{figure}[!ht]
    \centering
    \begin{subfigure}[b]{0.49\textwidth}
        \includegraphics[width=\textwidth]{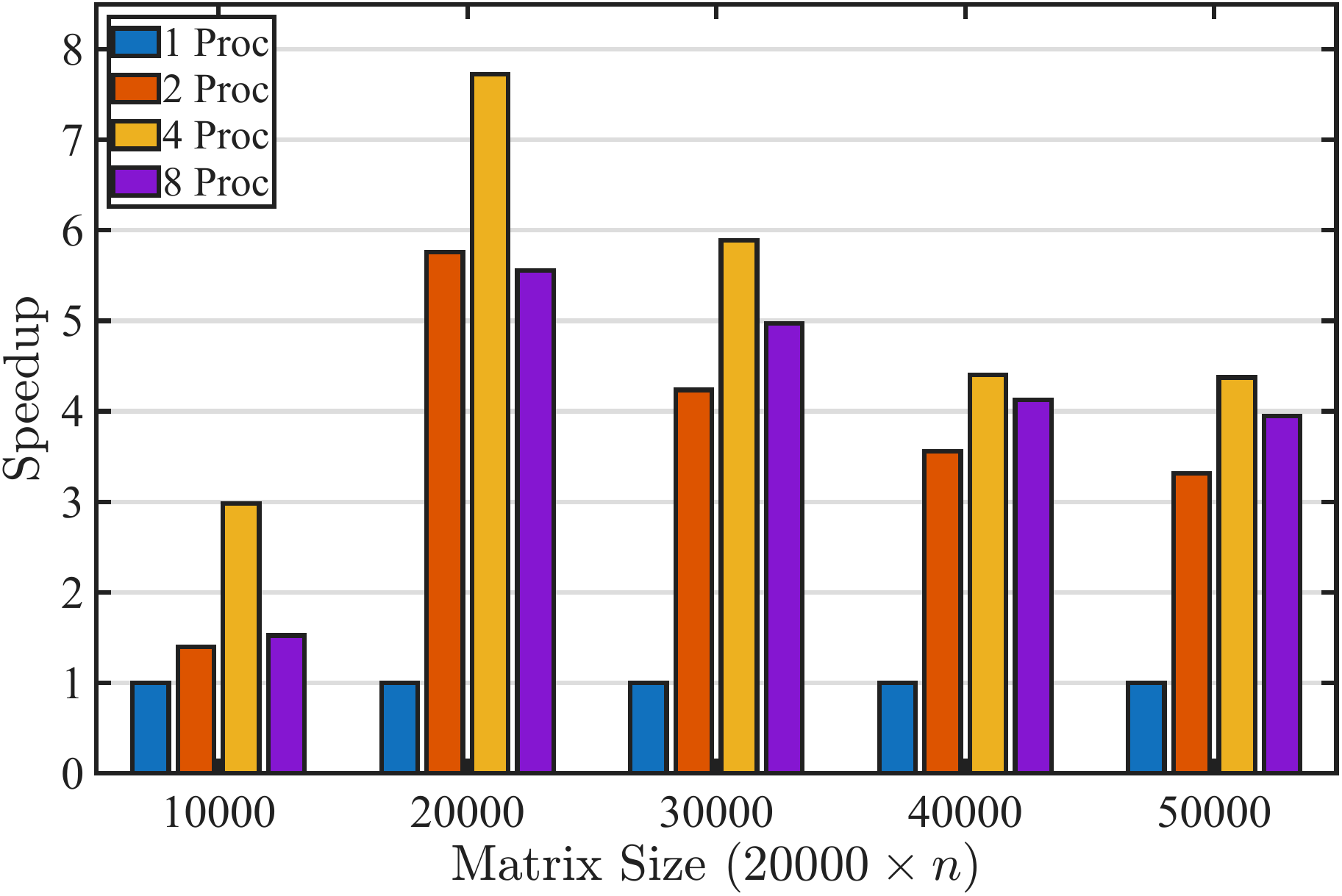}
        \caption{Speedups over different numbers of processes}
        \label{fig:parn:speedup}
    \end{subfigure}
    \hfill
    \begin{subfigure}[b]{0.49\textwidth}
        \includegraphics[width=\textwidth]{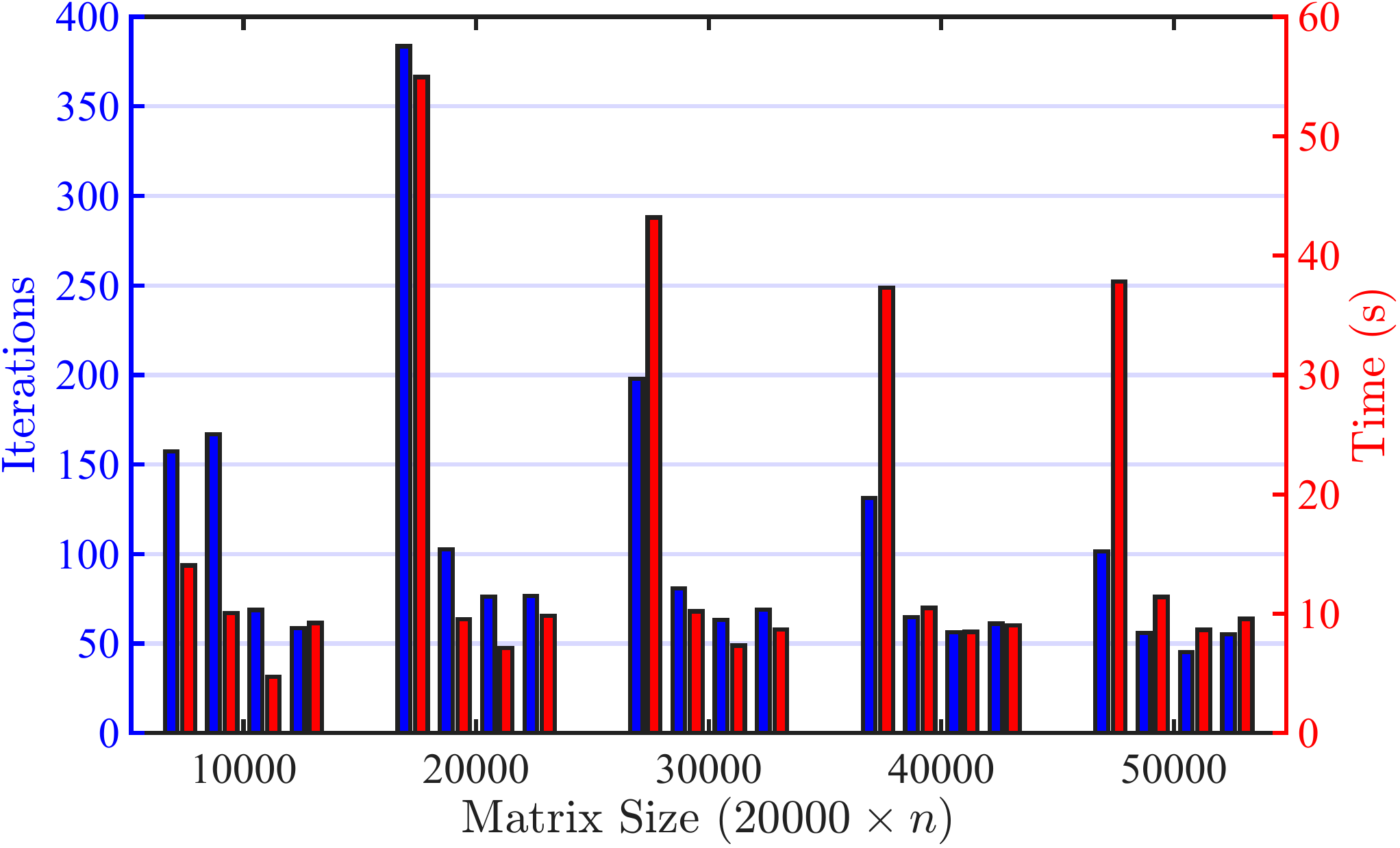}
        \caption{Iterations and Time dependence analysis}
        \label{fig:parn:it}
    \end{subfigure}
    \caption{Time, Iterations, and Speedups comparisons using results given in Table~\ref{tab:parSpeedupsn}.}
    \label{fig:parn}
\end{figure}

Similarly, Table~\ref{tab:parSpeedupsSq} reports average performance results for random square matrices of varying sizes. The scalability depicted in Fig.~\ref{fig:parsq:speedup} and the corresponding time-iteration relationship in Fig.~\ref{fig:parsq:it} confirm the same speedup behavior and parallel performance characteristics. These consistent trends reinforce the impact of mandatory global synchronization steps limiting scalability beyond $4$ processes, while lazy approximations help in the reduction of iteration counts and improve performance up to that point.

\begin{table}[!ht]
    \centering
    \resizebox{\textwidth}{!}{%
    \begin{tabular}{c c c c c c c}
    \hline
        \multicolumn{2}{c}{\textbf{Matrix Size ($\mathbf{m\times m}$)}} & $\mathbf{10000\times 10000}$ & $\mathbf{20000\times 20000}$ & $\mathbf{30000\times 30000}$ & $\mathbf{40000\times 40000}$ & $\mathbf{50000\times 50000}$ \\ \hline
        \multirow{3}{*}{$1$ Process} & Iterations & $357.6$ & $383.5$ & $373.8$ & $362.8$ & $343.4$ \\
                              & Time (s) & $18.8422$ & $54.9512$ & $122.2234$ & $197.5582$ & $285.6750$ \\
                              & Speedups & $1.000\times$ & $1.000\times$ & $1.000\times$ & $1.000\times$ & $1.000\times$ \\ \hline
        \multirow{3}{*}{$2$ Processes} & Iterations & $93.6$ & $102.7$ & $106.0$ & $106.4$ & $103.8$ \\
                              & Time (s) & $5.0166$ & $9.5441$ & $21.2242$ & $32.9088$ & $52.2300$ \\
                              & Speedups & $3.755\times$ & $5.756\times$ & $5.757\times$ & $6.004\times$ & $5.468\times$ \\ \hline
        \multirow{3}{*}{$4$ Processes} & Iterations & $67.2$ & $76.3$ & $83.4$ & $86.0$ & $85.8$ \\
                              & Time (s) & $6.5648$ & $7.1169$ & $12.9122$ & $19.9879$ & $28.8644$ \\
                              & Speedups & $2.871\times$ & $7.719\times$ & $9.465\times$ & $9.880\times$ & $9.897\times$ \\ \hline
        \multirow{3}{*}{$8$ Processes} & Iterations & $127.0$ & $76.5$ & $72.8$ & $79.8$ & $83.4$ \\
                              & Time (s) & $24.4893$ & $9.8432$ & $12.9961$ & $19.5361$ & $28.5555$ \\
                              & Speedups & $0.769\times$ & $5.582\times$ & $9.405\times$ & $10.114\times$ & $10.000\times$ \\ \hline
    \end{tabular}}
    \caption{Summary of average results over 10 runs for random square matrices with $99\%$ sparsity with termination condition: $\mathrm{RSE} \leq 10^{-4}$, and $\eta = 0.1$.}
    \label{tab:parSpeedupsSq}
\end{table}
\begin{figure}[!ht]
    \centering
    \begin{subfigure}[b]{0.49\textwidth}
        \includegraphics[width=\textwidth]{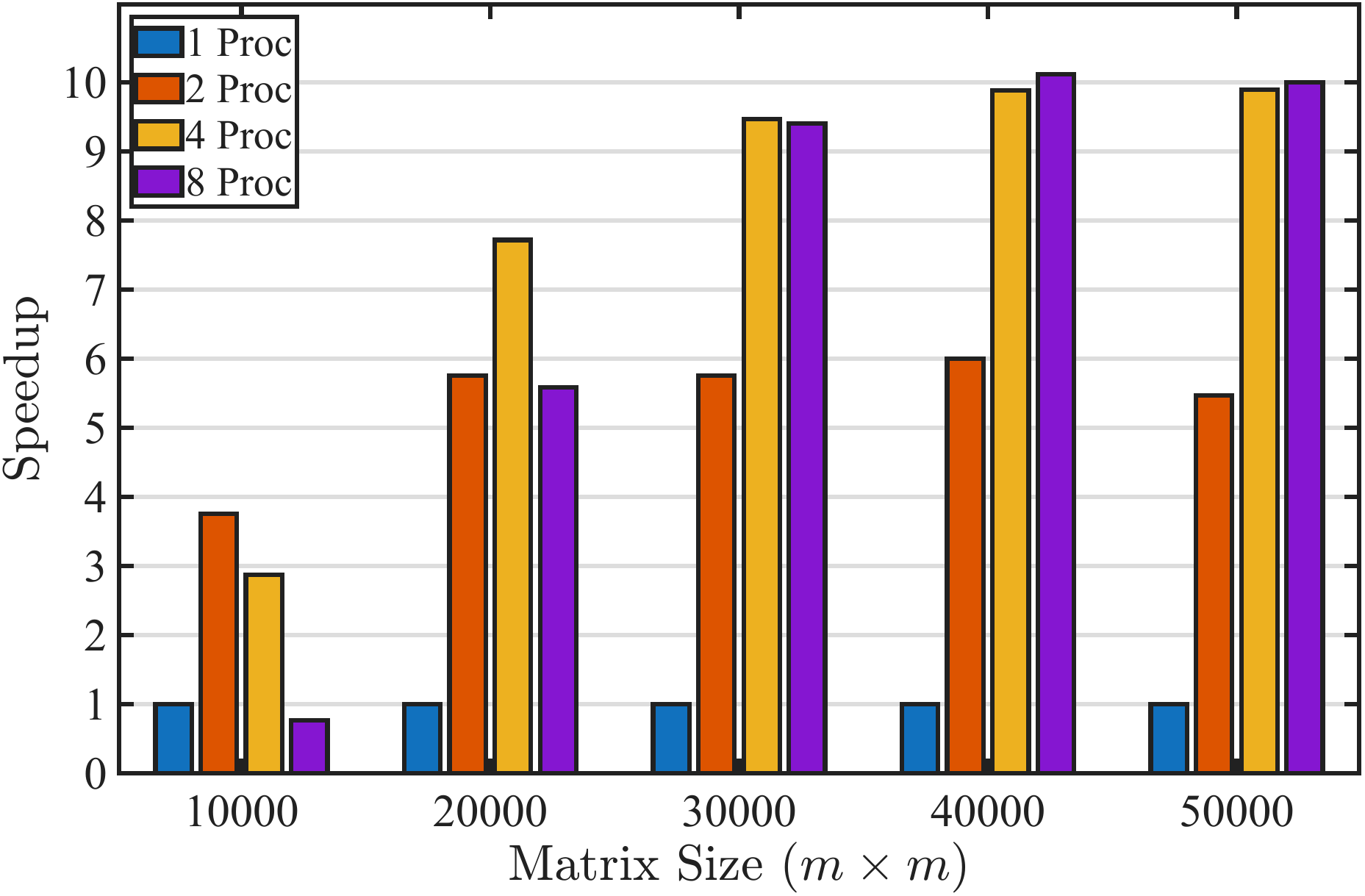}
        \caption{Speedups over different numbers of processes}
        \label{fig:parsq:speedup}
    \end{subfigure}
    \hfill
    \begin{subfigure}[b]{0.49\textwidth}
        \includegraphics[width=\textwidth]{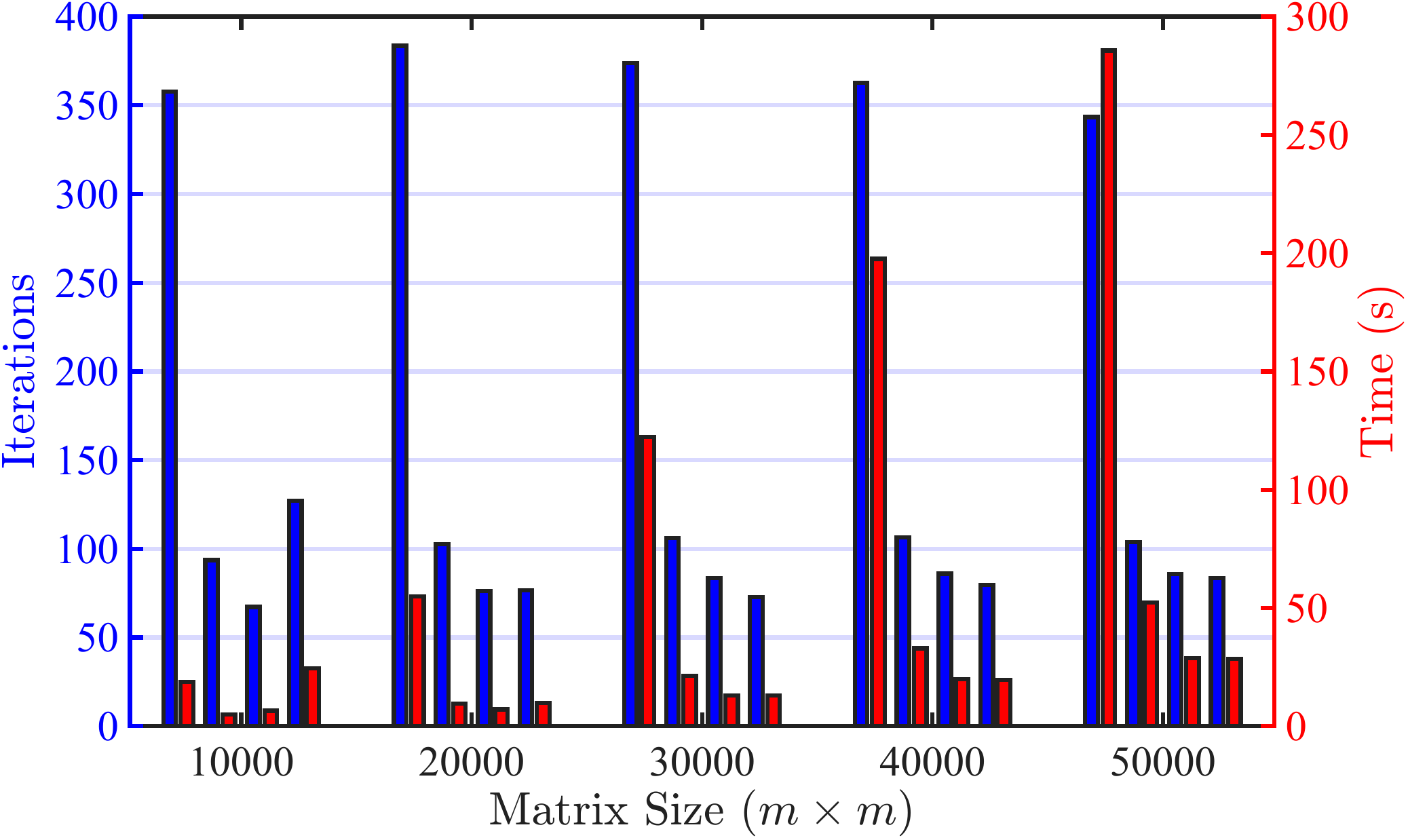}
        \caption{Iterations and Time dependence analysis}
        \label{fig:parsq:it}
    \end{subfigure}
    \caption{Time, Iterations, and Speedups comparisons using results given in Table~\ref{tab:parSpeedupsSq}.}
    \label{fig:parsq}
\end{figure}

\section{Applications}\label{sec:applications}

\subsection{Image Deblurring}
One of the applications of solving linear systems of equations is image deblurring, which is one of the classic problems. Image deblurring can be solved by simultaneously solving multiple large‑scale linear systems arising from each color channel. Given a color image $\mathcal{X} \in \mathbb{R}^{m \times n \times 3}$, we vectorize each of the three channels to obtain systems of the form,
\begin{equation}
    Ax_i = b_i\qquad \text{where, } A\in\mathbb{R}^{mn\times mn}, x_i, b_i\in\mathbb{R}^{mn\times 1} \text{ for } i = 1,2,3,
\end{equation}

where $b_i$ denotes the blurred observations, which are obtained using $Ax_i$ and $x_i$, the corresponding true channel to be recovered. Here, $A$ is assumed to be a banded Toeplitz matrix defined by a one-dimensional Gaussian point-spread function of standard deviation $\sigma$ and half‑width $r$ whose entries are defined as,
\begin{equation}\label{eq:toeplitz}
    A_{ij} =
    \begin{cases}
        \frac{1}{\sigma \sqrt{2\pi}} \exp{\left(-\frac{(i-j)^2}{2\sigma^2}\right)}, & |i-j| \leq r,\\
        0, & \text{otherwise},
    \end{cases}
\end{equation}

with $\sigma=20$ and $r=20$ for all channels. To demonstrate the effectiveness of RGDBEK in real-world scenarios, we conduct experiments on images of size $256\times256\times3$, for which $A$ has dimensions $65536\times65536$. Table~\ref{tab:image} summarizes the Peak Signal‑to‑Noise Ratio (PSNR), Structural Similarity Index (SSIM), and average relative residual over all channels, achieved on three test images (``Cat'', ``Monalisa'', and ``Anime''). Fig.~\ref{fig1:cat}, \ref{fig2:monalisa}, and \ref{fig3:anime} visually compare, for each test image, the original, blurred, and deblurred results, highlighting the restoration of fine details and color fidelity.

\begin{table}[!ht]
    \centering
    \begin{tabular}{c c c c c}\hline
        \textbf{Image} & \textbf{Pixels} & \textbf{PSNR} & \textbf{SSIM} & \textbf{Avg. RSE ($\mathbf{\times10^{-12}}$)} \\\hline
        Cat (Fig.~\ref{fig1:cat}) & $256\times 256\times 3$ & $46.59$ & $0.9982$ & $4.303157$ \\
        Monalisa (Fig.~\ref{fig2:monalisa}) & $256\times 256\times 3$ & $50.18$ & $0.9980$ & $2.474059$ \\
        Anime (Fig.~\ref{fig3:anime}) & $256\times 256\times 3$ & $41.21$ & $0.9878$ & $6.247516$ \\\hline
    \end{tabular}
    \caption{Deblurring of images}
    \label{tab:image}
\end{table}

\begin{figure}[!ht]
    \centering
    \begin{subfigure}[b]{0.25\textwidth}
        \includegraphics[width=\textwidth]{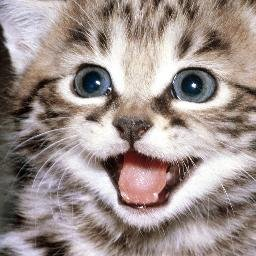}
        \caption{Original}
        \label{fig1:original}
    \end{subfigure}
    \hfill
    \begin{subfigure}[b]{0.25\textwidth}
        \includegraphics[width=\textwidth]{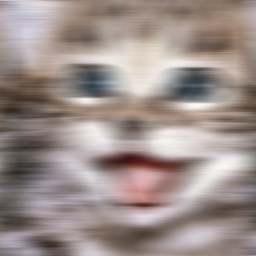}
        \caption{Blurred}
        \label{fig1:blur}
    \end{subfigure}
    \hfill
    \begin{subfigure}[b]{0.25\textwidth}
        \includegraphics[width=\textwidth]{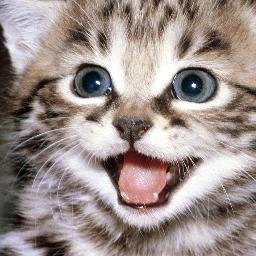}
        \caption{Restored}
        \label{fig1:recons}
    \end{subfigure}
    \caption{Cat image of size $256\times 256\times 3$.}
    \label{fig1:cat}
\vspace{.8cm}
    \begin{subfigure}[b]{0.25\textwidth}
        \includegraphics[width=\textwidth]{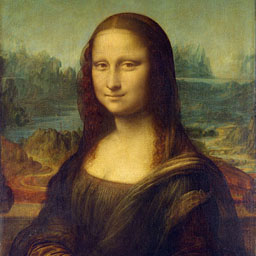}
        \caption{Original}
        \label{fig2:original}
    \end{subfigure}
    \hfill
    \begin{subfigure}[b]{0.25\textwidth}
        \includegraphics[width=\textwidth]{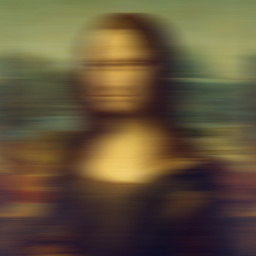}
        \caption{Blurred}
        \label{fig2:blur}
    \end{subfigure}
    \hfill
    \begin{subfigure}[b]{0.25\textwidth}
        \includegraphics[width=\textwidth]{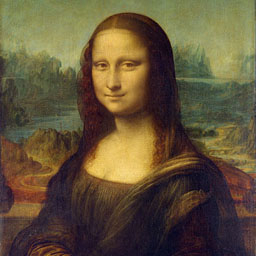}
        \caption{Restored}
        \label{fig2:recons}
    \end{subfigure}
    \caption{Monalisa image of size $256\times 256\times 3$.}
    \label{fig2:monalisa}
\vspace{.8cm}
    \begin{subfigure}[b]{0.25\textwidth}
        \includegraphics[width=\textwidth]{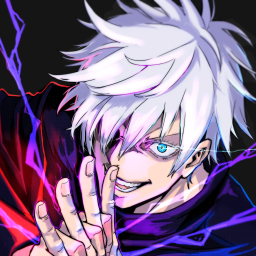}
        \caption{Original}
        \label{fig3:original}
    \end{subfigure}
    \hfill
    \begin{subfigure}[b]{0.25\textwidth}
        \includegraphics[width=\textwidth]{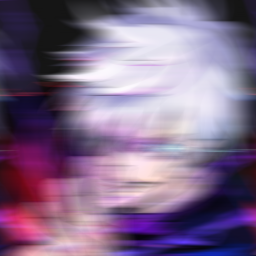}
        \caption{Blurred}
        \label{fig3:blur}
    \end{subfigure}
    \hfill
    \begin{subfigure}[b]{0.25\textwidth}
        \includegraphics[width=\textwidth]{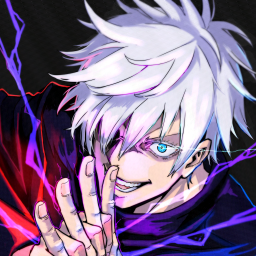}
        \caption{Restored}
        \label{fig3:recons}
    \end{subfigure}
    \caption{Anime image of size $256\times 256\times 3$.}
    \label{fig3:anime}
\end{figure}

\subsection{Finite Element Method}
Large, sparse linear systems of equations arise in solving partial differential equations (PDEs) using the finite difference (FD) or finite element method (FEM). Poisson and Helmholtz equations are fundamental elliptical PDEs widely used in physics and engineering to model steady-state diffusion and wave phenomena, respectively. Discretizing these PDEs using the FEM leads to linear systems of the form, $A \mathbf{u} = \mathbf{b},$
where the system matrix $A \in \mathbb{R}^{N \times N}$ is the matrix arising from the weak formulation of these PDEs over a discretized mesh with $N$ degrees of freedom. $\mathbf{u}$ contains values of the approximate solution, and $\mathbf{b}$ is the discretized source. 

\subsubsection{Poisson equation} 
The Poisson equation is given by,
\begin{equation}
    -\Delta u = f \quad \text{in } \Omega, \quad u=0\text{ on } \partial \Omega,
\end{equation}

which is discretized using linear basis functions on triangles using Delaunay triangulation. The exact solution used for validation is,
$$u(x,y) = \sin(\pi x) \sin(\pi y),$$

with source term,
$$f(x,y) = 2 \pi^2 \sin(\pi x) \sin(\pi y).$$

For experimentation, we have assumed mesh domain as $\Omega = [0,1]\times [0,1]$, which forms uniform grid with $n_x = 25, n_y = 25$ resulting in $N = n_xn_y = 625$ nodes. The experiment is run for $\eta = 0.5$ and maximum $10000$ iterations with tolerance $RSE\leq 10^{-6}$.

\begin{figure}[!ht]
    \centering
    \begin{subfigure}[b]{0.3\textwidth}
        \includegraphics[width=\textwidth]{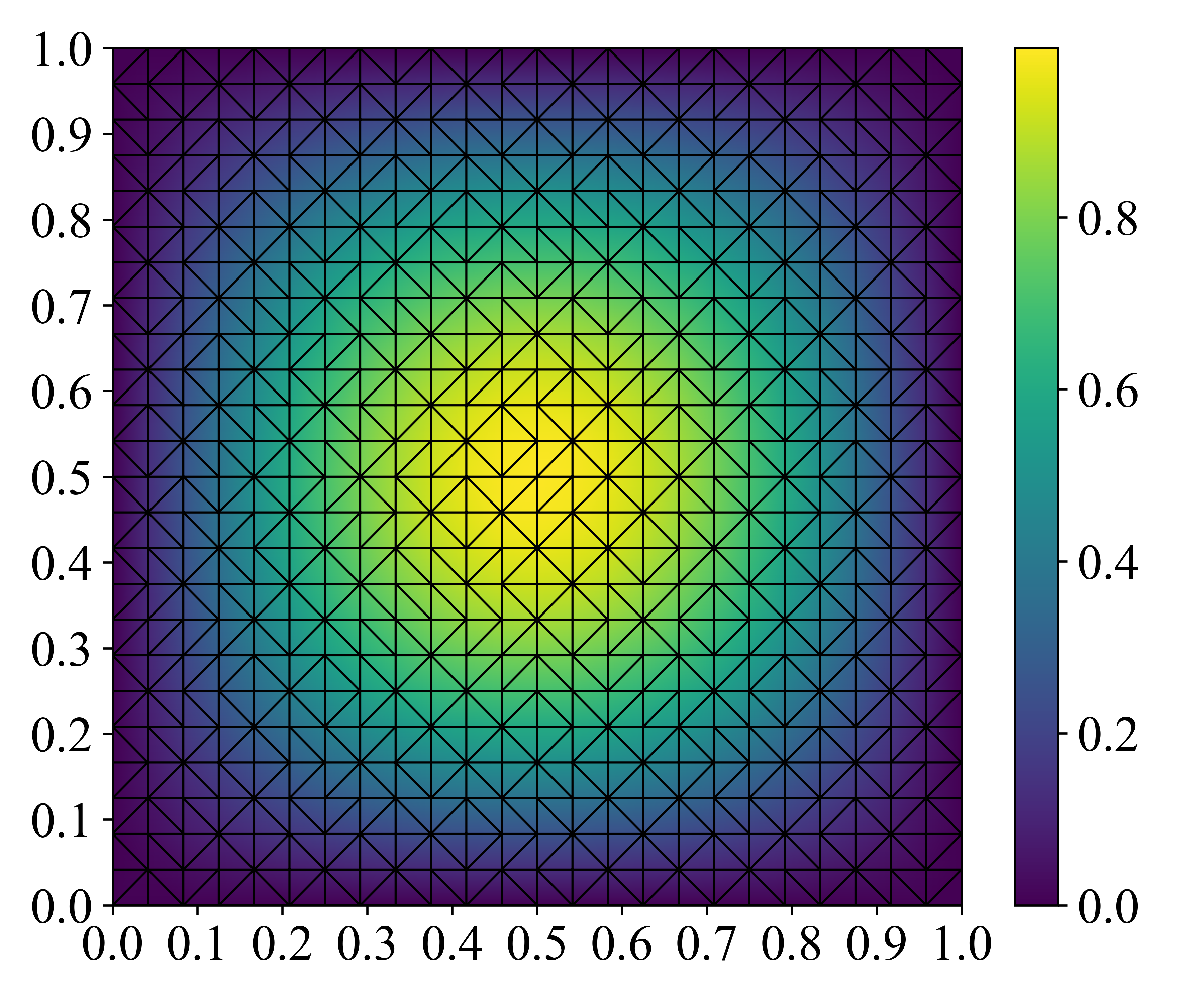}
        \caption{FEM Solution}
    \end{subfigure}
    \hfill
    \begin{subfigure}[b]{0.3\textwidth}
        \includegraphics[width=\textwidth]{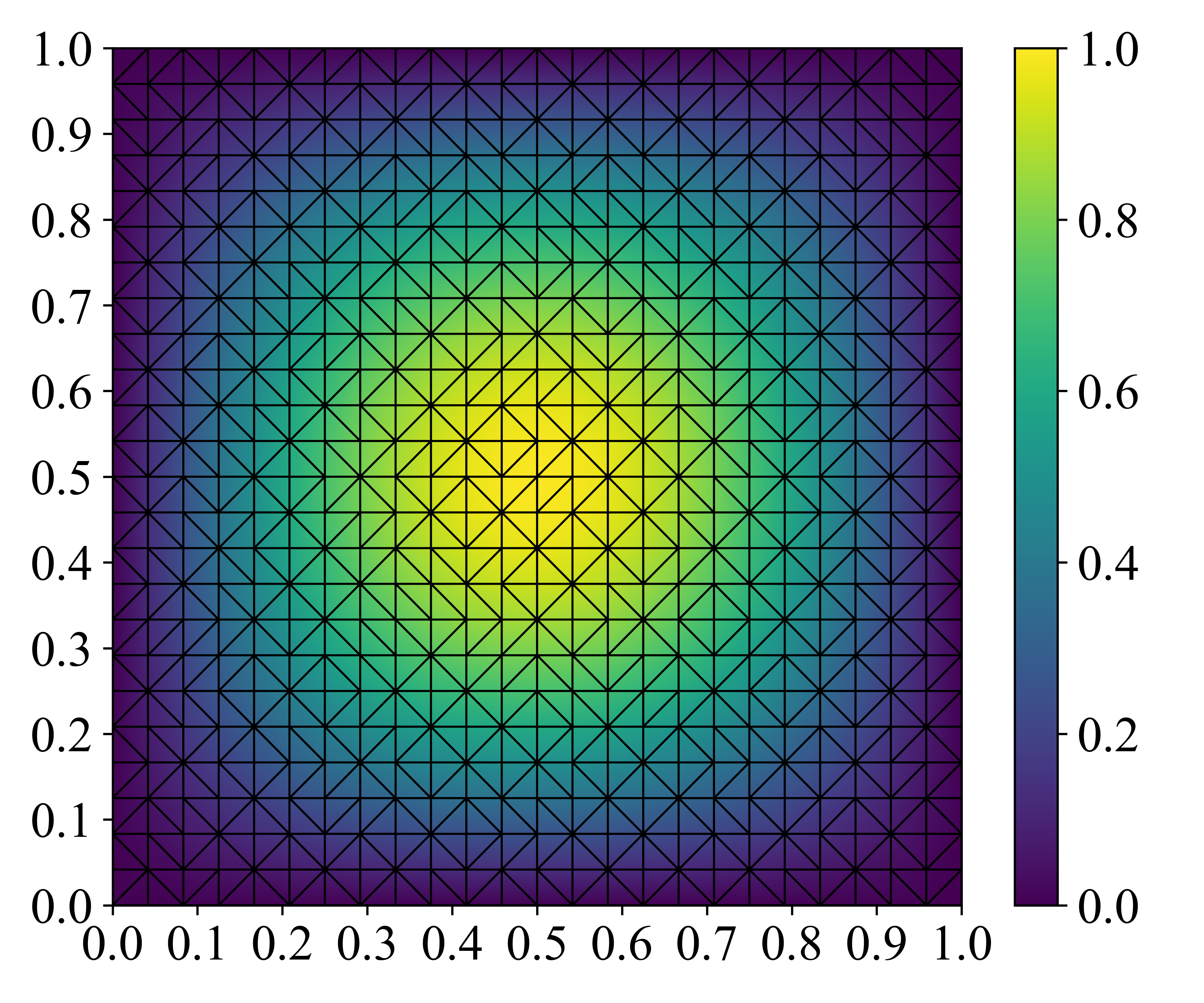}
        \caption{Exact Solution}
    \end{subfigure}
    \hfill
    \begin{subfigure}[b]{0.3\textwidth}
        \includegraphics[width=\textwidth]{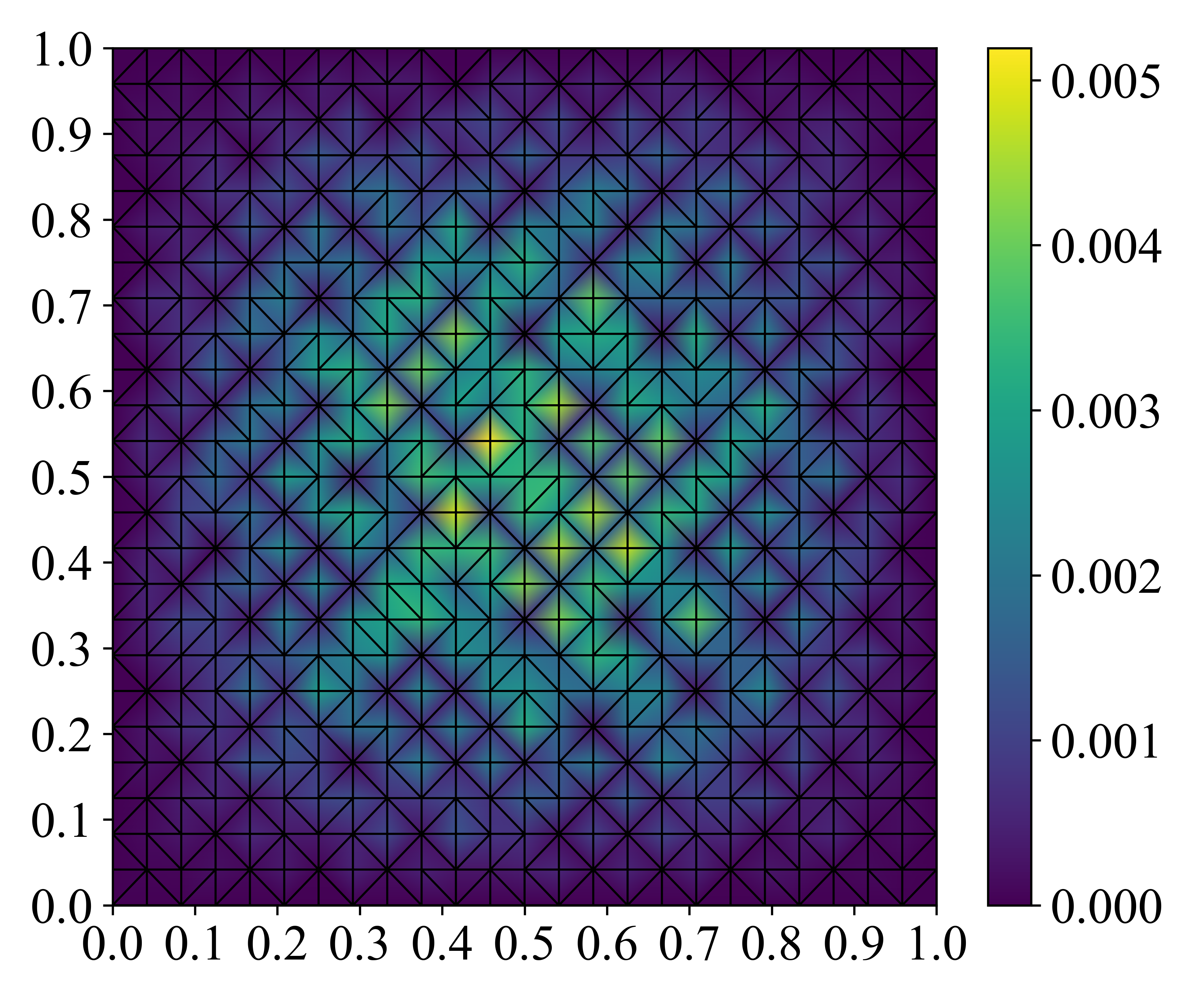}
        \caption{Absolute Error}
    \end{subfigure}
    \caption{Solution of Poisson equation using $1152$ Triangular elements}
    \label{fig:pde1}
\end{figure}
 
\subsubsection{Helmholtz equation} 
The Helmholtz PDE with wavenumber $k$,
\begin{equation}
    -\Delta u - k^2 u = f \quad \text{in } \Omega, \quad u = 0 \text{ on } \partial \Omega,
\end{equation}

is discretized similarly. The exact solution used for validation is,
$$u(x,y) = \sin(4\pi x) \sin(4\pi y),$$

with source term,
$$f(x,y) = \left(2(4 \pi)^2 - k^2 \right) \sin(4\pi x) \sin(4\pi y).$$

For experimentation, we have assumed a wave number $k=25$, mesh domain as $\Omega = [0,1]\times [0,1]$, which forms a uniform grid with $n_x = 50, n_y = 50$ resulting in $N = n_xn_y = 2500$ nodes. The experiment is run for $\eta = 0.5$ and maximum $10000$ iterations with tolerance $RSE\leq 10^{-6}$.

\begin{figure}[!ht]
    \centering
    \begin{subfigure}[b]{0.3\textwidth}
        \includegraphics[width=\textwidth]{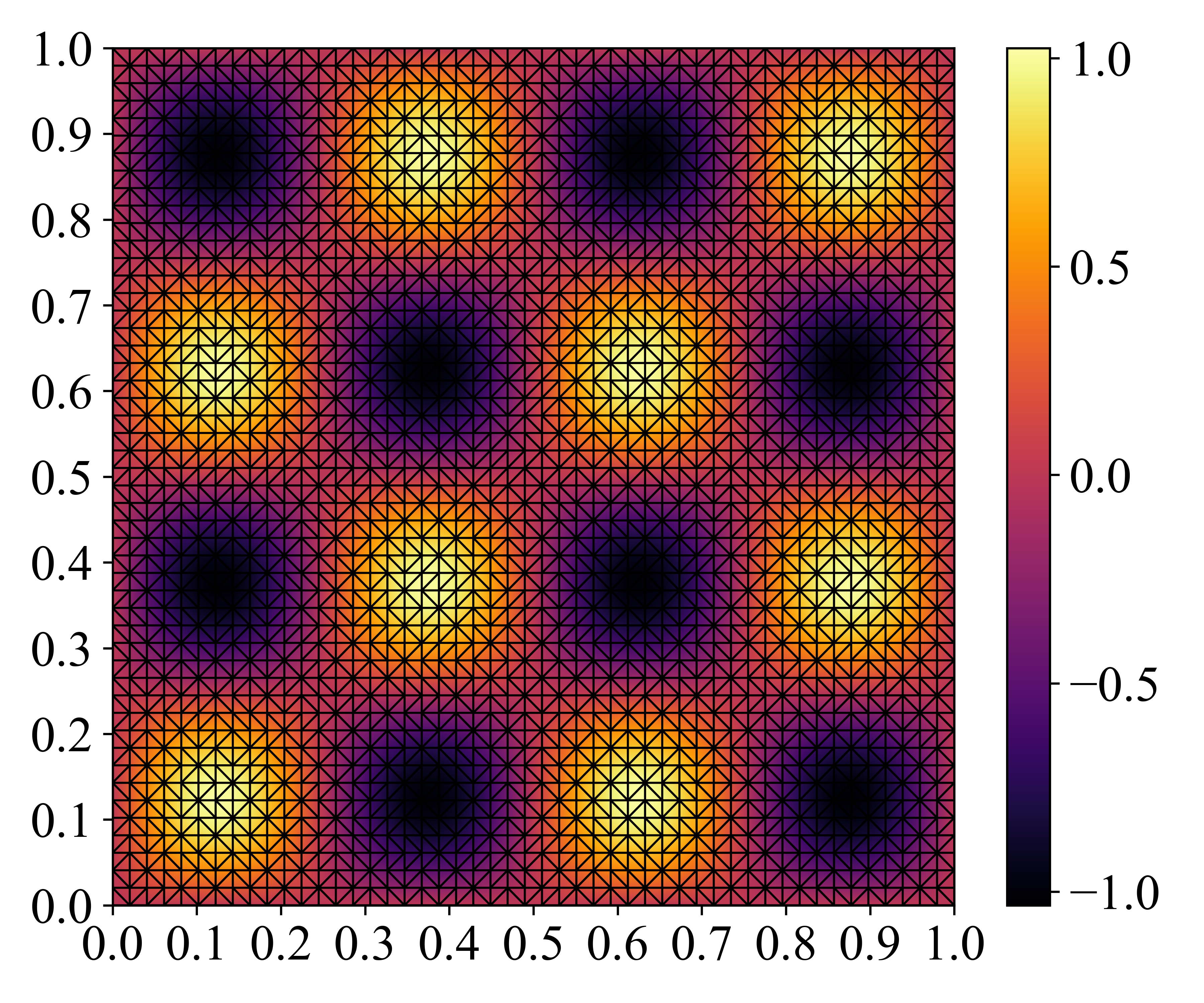}
        \caption{FEM Solution}
    \end{subfigure}
    \hfill
    \begin{subfigure}[b]{0.3\textwidth}
        \includegraphics[width=\textwidth]{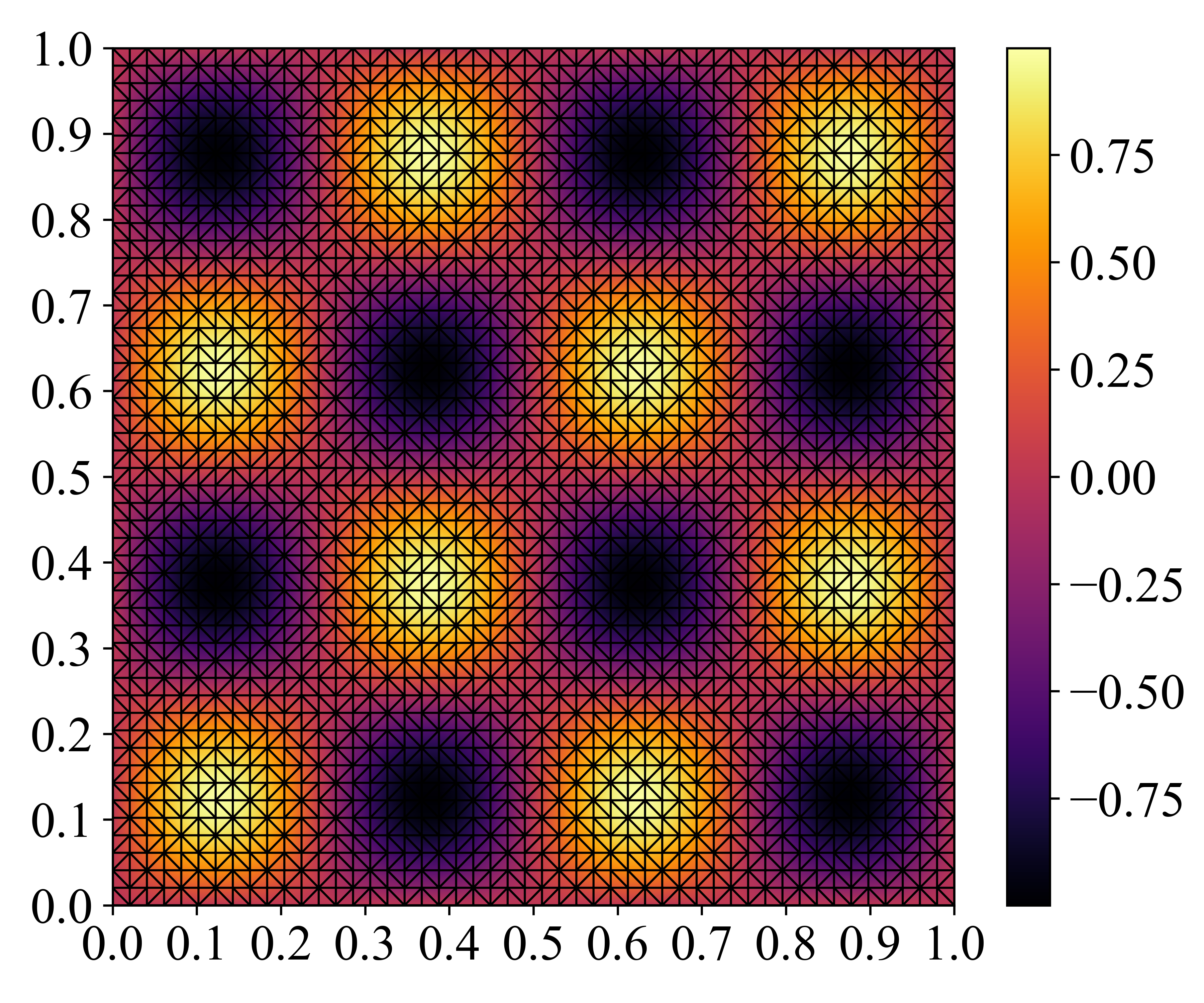}
        \caption{Exact Solution}
    \end{subfigure}
    \hfill
    \begin{subfigure}[b]{0.3\textwidth}
        \includegraphics[width=\textwidth]{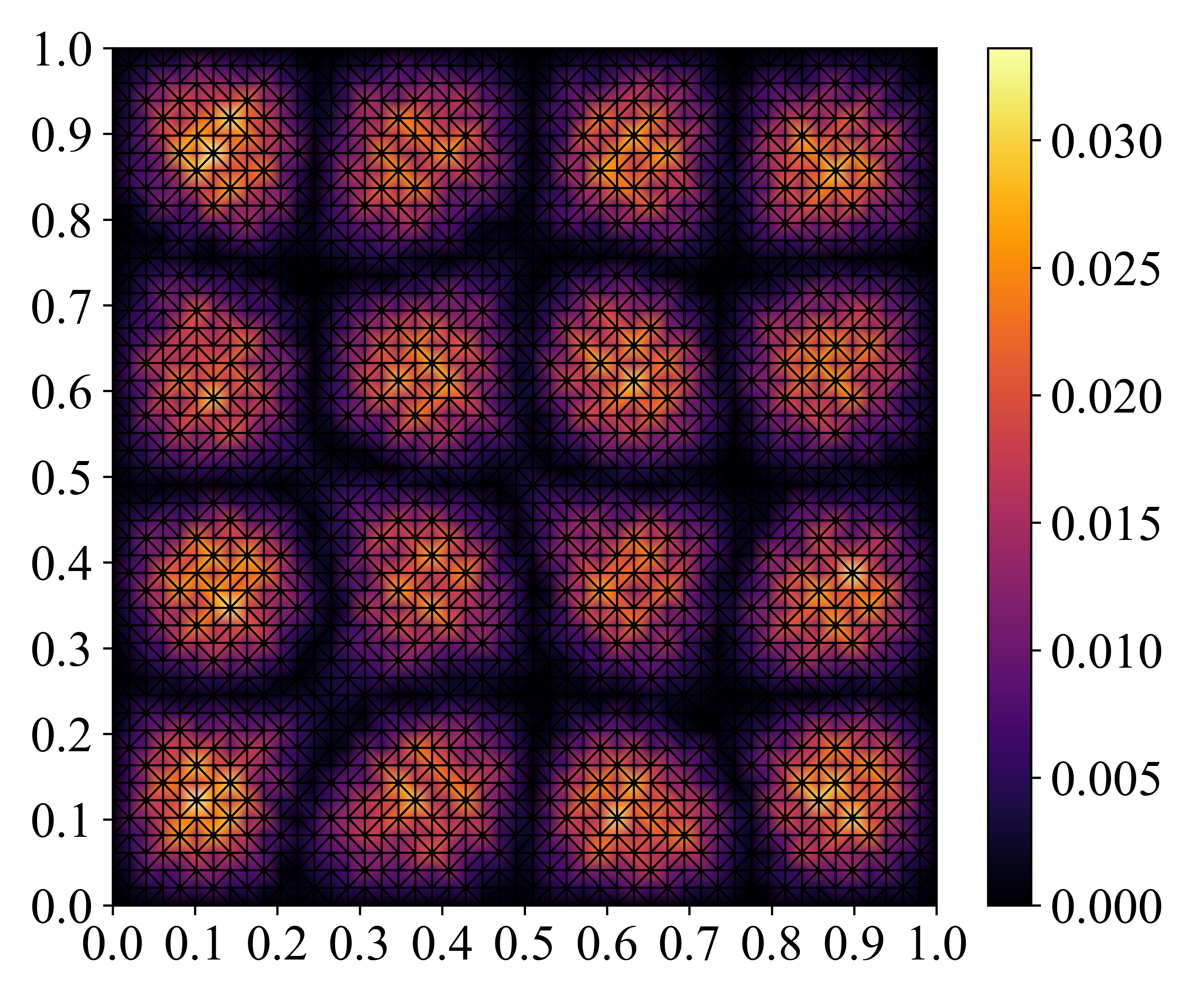}
        \caption{Absolute Error}
    \end{subfigure}
    \caption{Solution of Helmholtz equation using $4802$ Triangular elements}
    \label{fig:pde2}
\end{figure}

\subsubsection{Results}
Table~\ref{tab:poisson_helmholtz} summarizes the performance results for both Poisson and Helmholtz PDEs discretized using the FEM on triangular meshes. From the table, it is evident that the FEM system matrices are highly sparse and exhibit very large condition numbers. For a RSE threshold of $10^{-6}$, the relative $L_2$ error is computed as $3.24 \times 10^{-3}$ for the Poisson equation and $2.33 \times 10^{-2}$ for the Helmholtz equation. The FEM reconstructions, along with the absolute error distributions across the computational domain, are depicted in Fig.~\ref{fig:pde1} and Fig.~\ref{fig:pde2}.

\begin{table}[!ht]
\centering
\begin{tabular}{c c c }
\hline
\textbf{Metric} & \textbf{Poisson} & \textbf{Helmholtz} \\\hline
Number of elements & $1152$ & $4802$ \\
Sparsity of $A$ & $99.32\%$ & $99.74\%$ \\
Condition number of $A$ & $2.327776 \times 10^{2}$ & $3.683559 \times 10^{3}$ \\
Frobenius Norm of $A$ & $1.028786 \times 10^{2}$ & $2.100564 \times 10^{2}$ \\
RSE of system & $9.359952 \times 10^{-7}$ & $9.970865 \times 10^{-7}$ \\
Relative $L_2$ Error & $3.238629 \times 10^{-3}$ & $2.329342 \times 10^{-2}$ \\
Std. Deviation of Error & $1.090497 \times 10^{-3}$ & $7.261207 \times 10^{-3}$ \\\hline
\end{tabular}
\caption{Performances for solving Poisson and Helmholtz equations using FEM Methods}
\label{tab:poisson_helmholtz}
\end{table}

\subsection{Signal Recovery in Mathematical Modelling}
Population modeling is considered one of the most difficult problems in mathematical models due to noise incurred by natural factors, including earthquakes and tsunamis. However, we can derive an approximate solution to this problem using past values by forming a linear system, which denoises the values to give us approximate forward time values. To demonstrate this, we have used the Predator Prey Scavenger model~\cite{panchal2025predator}, which is governed by the following set of ordinary differential equations,
\begin{equation}\label{eq:predpreyscav}
\begin{split}    
\frac{dx}{dt} &= rx\left(1 - \frac{x}{k}\right) - \frac{ax^2 y}{1 + a_0 x^2} - \frac{bx^2 z}{1 + b_0 x^2},\\
\frac{dy}{dt} &= \frac{dx^2 y}{1 + a_0 x^2} + \frac{fz^2 y}{1 + i_0 z^2} - ey,\\
\frac{dz}{dt} &= \frac{gx^2 z}{1 + b_0 x^2} + hyz - \frac{iyz^2}{1 + i_0 z^2} - jz, 
\end{split}
\end{equation}

with all real positive parameter values and strictly positive initial populations. We define the populations at a particular time $t$ as the convolution of delayed noisy values and filter coefficients,
\begin{equation*}
x(t) = \sum_{i=0}^{n} m_x(t-i)*c_x(i), \quad y(t) = \sum_{i=0}^{n} m_y(t-i)*c_y(i), \quad z(t) = \sum_{i=0}^{n} m_z(t-i)*c_z(i),
\end{equation*}

where, $m_x(t) = x(t-k) + \mathbf{n}(t),\; m_y = y(t-k) + \mathbf{n}(t),\; m_z = z(t-k) + \mathbf{n}(t),\; \mathbf{n}(t)\sim\mathcal{N}(\mu,\sigma^2)$ are the noisy delayed population values with delay parameter $k\in{0,1,2,\dots, n}$, and $c_x, c_y$ and $c_z$ are the respective filter coefficients. Then, the system can be seen as a system of linear equations given by,
\begin{equation*}
\mathbf{M_v} = 
\begin{bmatrix}
m_v(t) & m_v(t-1) & \cdots & m_v(t-n) \\
m_v(t+1) & m_v(t) & \cdots & m_v(t-n+1) \\
\vdots & \vdots & \ddots & \vdots \\
m_v(t+n) & m_v(t+n-1) & \cdots & m_v(t) \\
\end{bmatrix}, \quad\mathbf{c_v} = \begin{bmatrix}
c_v(0)\\c_v(1)\\ \vdots \\c_v(n)
\end{bmatrix},
\quad\mathbf{v} = \begin{bmatrix}
v(t)\\ v(t + 1) \\\vdots \\ v(t + n)
\end{bmatrix},
\end{equation*}

for $v\in\{x,y,z\}$, which can be written as,
\begin{equation*}
    \mathbf{M_v} \mathbf{c_v} = \mathbf{v},
\end{equation*}

where, $\mathbf{M_v}$ is the noisy delayed population matrix, $\mathbf{c_v}$ is the filter coefficient and $\mathbf{v}$ is the actual population for prey ($x(t)$), predators ($y(t)$) and scavengers ($z(t)$). We initially compute the filter coefficient using the proposed RGDBEK algorithm. Then, filter coefficients are used to predict the population from noisy populations. Here, we have assumed that the noise distribution is the same across the spread of the populations. 

For the experiment, Table~\ref{tab:param} shows the parameters of Eq.~\eqref{eq:predpreyscav} we have assumed with initial conditions $x(0) = 4$, $y(0) = 3$ and $z(0) = 2$. We simulated the solution from $T = 0$ to $T=200$ with $0.1$ time step. We assumed the Gaussian noise with mean $\mu=0$ and standard deviation $\sigma=3$ along with a delay parameter $k=1$. For RGDBEK, we fixed the maximum number of iterations to be $10000$ with $\eta=0.5$ and tolerance $RSE\leq 10^{-10}$.

\begin{table}[!ht]
\centering
\begin{tabular}{c c c c c c c c c c c c c c c}
\hline
Parameter & $r$ & $k$ & $a$ & $a_0$ & $b$ & $b_0$ & $d$ & $e$ & $f$ & $g$ & $h$ & $i$ & $i_0$ & $j$ \\
\hline
Value & 0.5 & 100 & 0.5 & 0.25 & 0.5 & 0.25 & 0.5 & 1 & 0.1 & 0.5 & 0.1 & 0.1 & 0.25 & 1 \\
\hline
\end{tabular}
\caption{Numerical values of the parameters of Eq.~\eqref{eq:predpreyscav} used.}
\label{tab:param}
\end{table}

\begin{figure}[!ht]
    \centering
    \begin{subfigure}[b]{0.49\textwidth}
        \includegraphics[width=\textwidth]{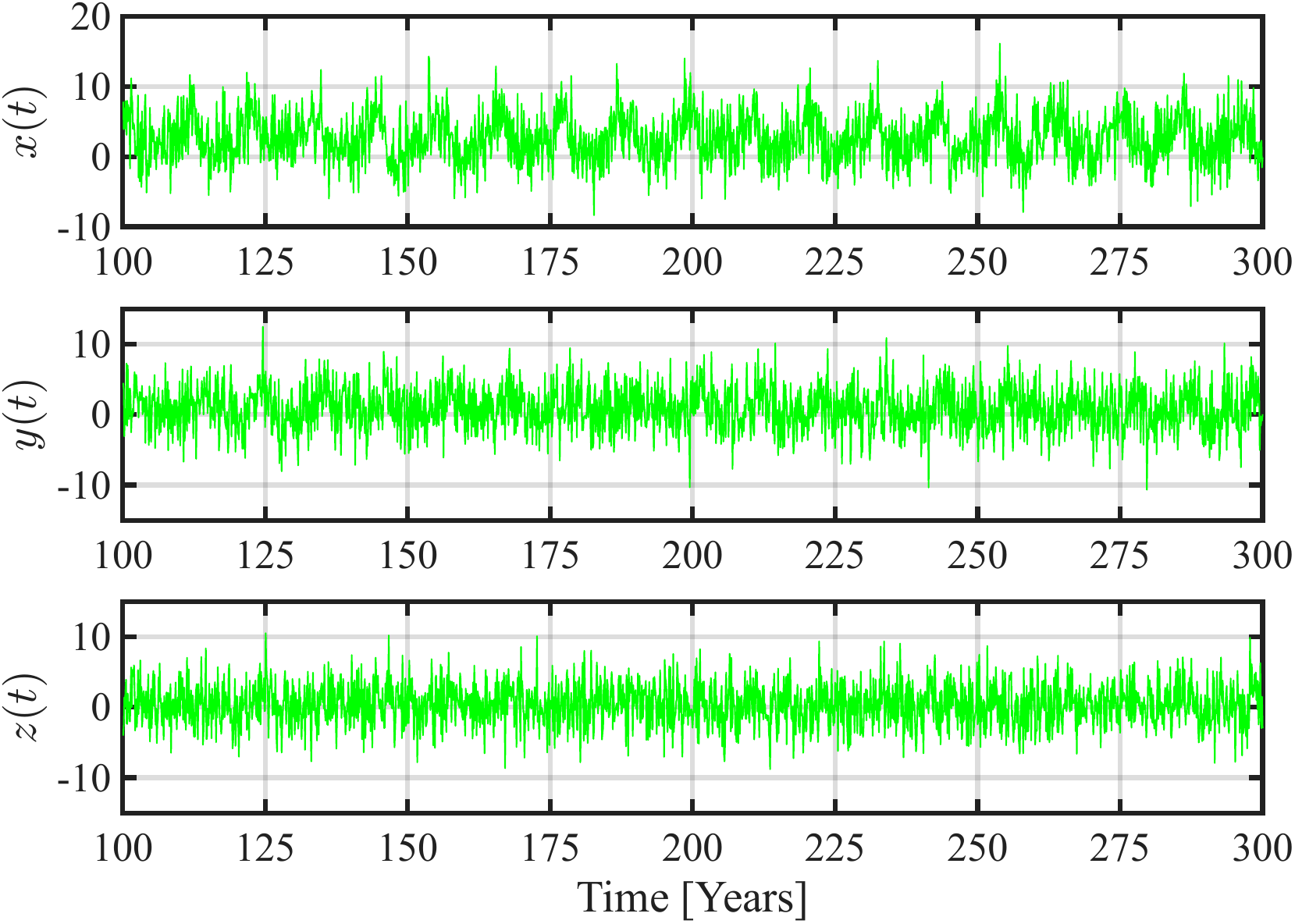}
        \caption{Noisy Delayed Populations}
        \label{fig:noisy}
    \end{subfigure}
    \hfill
    \begin{subfigure}[b]{0.49\textwidth}
        \includegraphics[width=\textwidth]{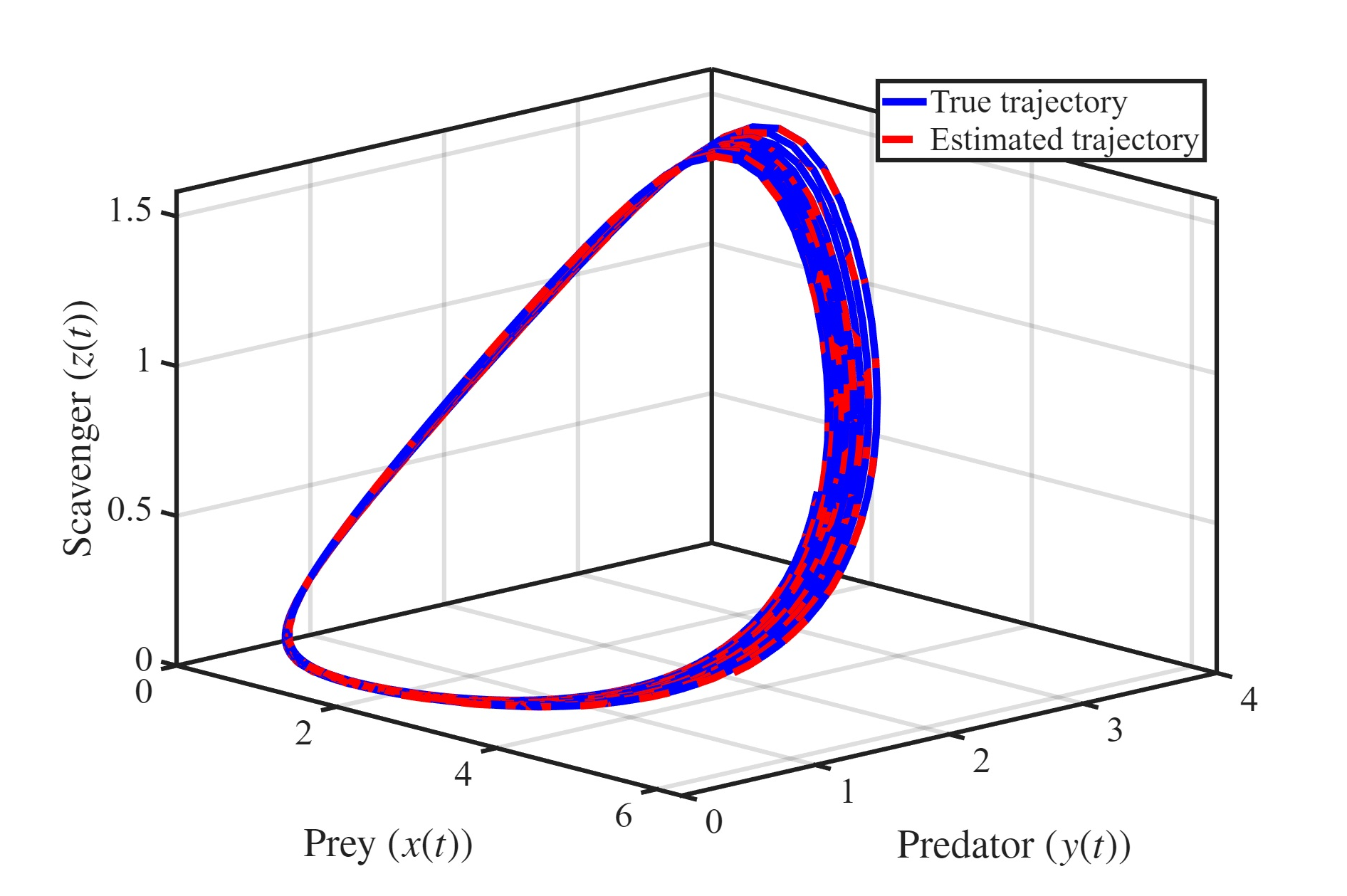}
        \caption{Phase Portrait of the System}
        \label{fig:phase}
    \end{subfigure}
    \caption{Noisy delayed populations and Phase portrait of the true and estimated system.}
    \label{fig:signal}
\end{figure}

\begin{figure}[!ht]
    \centering
    \begin{subfigure}[b]{0.49\textwidth}
        \includegraphics[width=\textwidth]{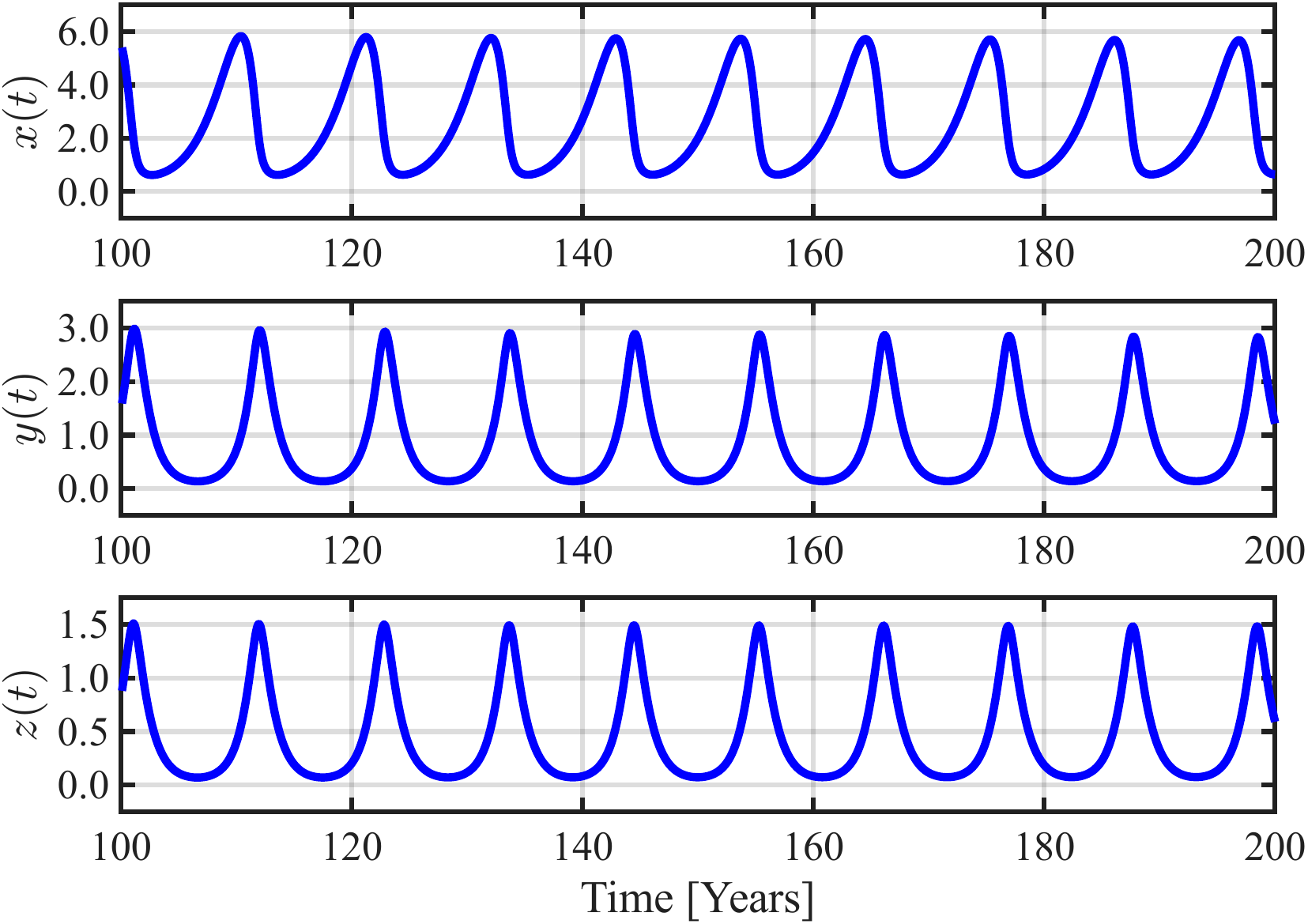}
        \caption{True Populations}
        \label{fig:true}
    \end{subfigure}
    \hfill
    \begin{subfigure}[b]{0.49\textwidth}
        \includegraphics[width=\textwidth]{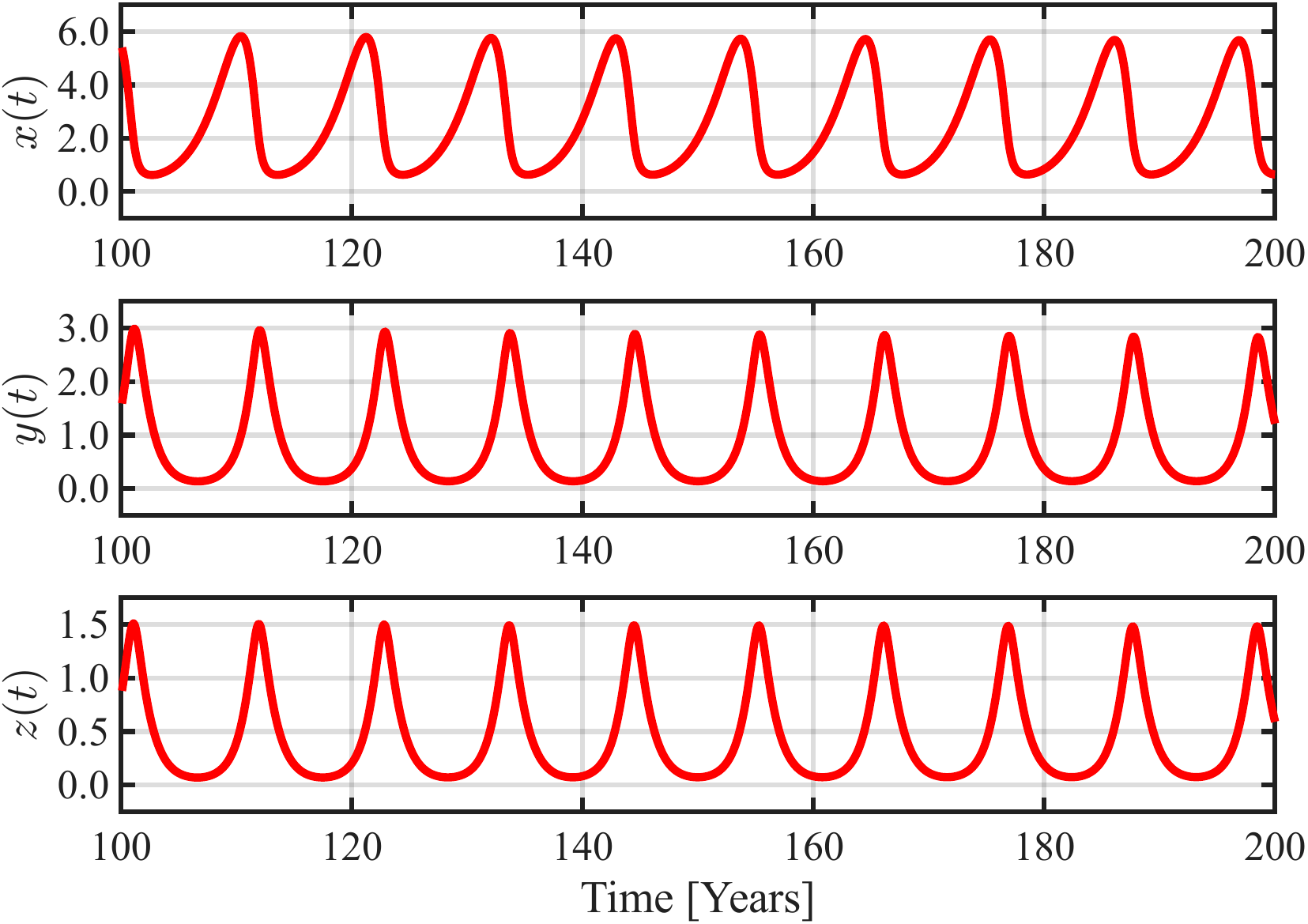}
        \caption{Estimated Populations}
        \label{fig:predict}
    \end{subfigure}
    \caption{Comparison of true and estimated populations.}
    \label{fig:signalPredict}
\end{figure}

Fig.~\ref{fig:noisy} shows the noisy delayed population data considered for denoising, which is created according to the experimental setup described above. It is evident from Fig.~\ref{fig:true}, \ref{fig:predict}, and \ref{fig:phase} that the denoising performance is excellent. Specifically, Fig.~\ref{fig:true} depicts the true population values, Fig.~\ref{fig:predict} illustrates the predicted population values after denoising, and Fig.~\ref{fig:phase} presents the phase portrait comparing the true and predicted population trajectories. To quantitatively assess the reconstruction accuracy, we computed the Frobenius norm of the reconstruction error, which yields a small value of $0.0006123908$. This low error confirms the high-quality reconstruction achieved by the RGDBEK algorithm, demonstrating its promising potential for future applications in signal processing and population modeling.

\section{Conclusions}\label{sec:conclusions}
In this paper, we propose the RGDBEK algorithm, a novel iterative method for solving large-scale linear systems. The RGDBEK algorithm introduces a randomized selection strategy for column and row blocks based on residual-derived probability distributions, effectively mitigating the conventional seesaw effect and improving convergence robustness. Theoretical analysis established the linear convergence of RGDBEK under standard assumptions. We performed extensive numerical experiments on both synthetic random matrices and real-world sparse matrices from the \texttt{SuiteSparse} collection, which demonstrate that RGDBEK consistently outperforms existing state-of-the-art Kaczmarz variants, including GRK, FDBK, FGBK, and GDBEK, in terms of iteration counts and computational time. We also developed a hybrid parallel implementation that leverages CPU-GPU architectures, further enhancing scalability and performance on large-scale sparse problems while efficiently managing sparse matrix-vector multiplications via the BSAS storage format.

Applications in finite element discretizations of Poisson and Helmholtz PDEs, image deblurring, and noisy population modeling illustrate the practical significance and versatility of the proposed algorithm. Overall, RGDBEK provides a powerful and scalable framework for solving challenging large-scale linear systems arising in scientific computing and engineering applications. Future work includes extending the RGDBEK algorithm to tensor systems, analyzing the impact of the parameter $\eta$ in parallel computational settings, and reducing communication overheads through the development of a novel algorithm for efficient column projections to improve efficiency and applicability further.

\section*{Declarations}
\subsection*{Ethical approval}
Not applicable
\subsection*{Availability of supporting data}
The authors confirm that the data supporting the findings of this study are available within the article. 
\subsection*{Competing interests}
The author declares no competing interests.
\subsection*{Funding}
Ratikanta Behera is grateful for the support of the Anusandhan National Research Foundation (ANRF), Govt. of India, under Grant No. EEQ/2022/001065. 
 
\section*{ORCID}
Ratikanta Behera\orcidA \href{https://orcid.org/0000-0002-6237-5700}{ \hspace{2mm}\textcolor{lightblue}{ https://orcid.org/0000-0002-6237-5700}}\\
Aneesh Panchal \orcidC \href{https://orcid.org/0009-0005-5727-3901}{ \hspace{2mm}\textcolor{lightblue}{https://orcid.org/0009-0005-5727-3901}} \\

\bibliographystyle{abbrv}
\bibliography{bibliography}

\end{document}